\documentclass[12pt]{article}
\usepackage{style}

\usepackage[english]{babel}
\usepackage{amsthm,amsmath,amssymb}
\usepackage{graphicx}
\usepackage[small,bf,hang]{caption} 
\usepackage{tabularx}

\usepackage{longtable}
\usepackage{subfig}                      
\usepackage{tikz}


\usepackage{algorithm}                   
\usepackage{algpseudocode}

\usepackage{hyperref}
\usepackage{enumerate} 

\usepackage{multirow}




\newcommand{\ram}[2]{R(J_{#1},J_{#2})}
\newcommand{\RamSet}{\mathcal{R}}
\newcommand{\N}{\mathbb{N}}
\newcommand{\range}[2][1]{\{#1,\dots,#2\}}
\newcommand{\neigh}[3]{N_{#1}^{#2}(#3)}


\newcolumntype{L}[1]{>{\raggedright\arraybackslash}p{#1}}
\newcolumntype{C}[1]{>{\centering\arraybackslash}p{#1}}
\newcolumntype{R}[1]{>{\raggedleft\arraybackslash}p{#1}}

\graphicspath{{figures/}}




\MSC{05C55, 05C30, 05C35, 05D10, 68R10}


\title{New bounds for Ramsey numbers $R(K_k-e, K_l-e)$}

\author{
Jan Goedgebeur\\
\small Department of Computer Science\\[-0.8ex]
\small KU Leuven Campus Kulak\\[-0.8ex]
\small 8500 Kortrijk, Belgium\\
\small Department of Applied Mathematics, Computer Science and Statistics\\[-0.8ex]
\small Ghent University\\[-0.8ex]
\small 9000 Ghent, Belgium\\[-0.8ex]
\small\tt jan.goedgebeur@ugent.be\\
\\
Steven Van Overberghe \\
\small Department of Applied Mathematics, Computer Science and Statistics\\[-0.8ex]
\small Ghent University\\[-0.8ex]
\small 9000 Ghent, Belgium\\[-0.8ex]
\small\tt steven.vanoverberghe@ugent.be\\
}


\begin{document}

\date{} 
\maketitle

\begin{abstract}
Let \( R(H_1,H_2) \) denote the Ramsey number for the graphs \( H_1, H_2 \), and let \( J_k \) be \( K_k{-}e \).  
We present algorithms which enumerate all circulant and block-circulant Ramsey graphs for different types of graphs, thereby obtaining several new lower bounds on Ramsey numbers including: \( 49 \leq R(K_3,J_{12}) \), \( 36 \leq R(J_4,K_8) \), \( 43 \leq R(J_4,J_{10}) \), \( 52 \leq R(K_4,J_8) \), \( 37 \leq \ram{5}{6} \), \( 43 \leq R(J_5,K_6) \), \( 65\leq\ram{5}{7} \). 
We also use a gluing strategy to derive a new upper bound on \( \ram{5}{6} \). With both strategies combined, we prove the value of two Ramsey numbers: \( \ram{5}{6}=37 \) and \( \ram{5}{7}=65 \). We also show that the 64-vertex extremal Ramsey graph for \( \ram{5}{7} \)  is unique. 
Furthermore, our algorithms also allow to establish new lower bounds and exact values on Ramsey numbers involving wheel graphs and complete bipartite graphs, including: $R(W_7,W_4) = 21$, $R(W_7,W_7) = 19$, $R(K_{3,4},K_{3,4}) = 25$, and $R(K_{3,5}, K_{3,5})=33$.

  \bigskip\noindent \textbf{Keywords:} Ramsey number, (block-)circulant graph,  almost-complete graph, wheel graph, computation
\end{abstract}


\section{Introduction}
\label{section:intro}
In this paper all graphs are simple and undirected. A graph $G=(V,E)$ consists of a set of vertices $V$ and a set of edges $E$. A graph $G' = (V',E')$ is a \textit{subgraph} of $G$ if $V' \subseteq V$ and $E' \subseteq E$. If $G'$ is a subgraph of $G$ and $\forall\, v, w \in V'$ the following holds: $\{v,w\} \in E \Rightarrow \{v,w\} \in E'$, then $G'$ is called an \textit{induced} subgraph of $G$. In that case we also refer to $G'$ as the subgraph of $G$ induced by the set of vertices $X = V'$ (denoted by~$G[X]$). We refer to~\cite{graph_theory_diestel} for any standard graph theory terminology which is not explicitly defined here. 

For two graphs $H_1$, $H_2$, the \textit{Ramsey number \(R(H_1,H_2)\)} is defined as the smallest integer~$n$ such that every assignment of two colours (e.g.\ blue and red) to the edges of the complete graph $K_n$ contains $H_1$ as a blue subgraph, or $H_2$ as a red subgraph.
A two-coloured $K_n$ containing no blue copy of $H_1$ nor a red copy of $H_2$ is called an \( (H_1,H_2;n) \)-(Ramsey)-graph.
The set of all \( (H_1,H_2;n) \)-graphs is denoted by \( \RamSet(H_1,H_2;n) \).
\( (H_1,H_2;n) \)-graphs with \( n= R(H_1,H_2) -1 \) are called \textit{extremal} Ramsey graphs for $R(H_1,H_2)$.  
The concept of Ramsey numbers also generalises to $c$ colours (i.e.\  $R(H_1,H_2,...,H_c)$), but in this article we focus on two colours.

The most-studied Ramsey numbers are those where \( H_1\) and \(H_2 \) are complete graphs, these are also called the \textit{classical} Ramsey numbers. In this paper we will mainly study Ramsey numbers involving \( J_k:=K_k{-}e \), i.e.\ complete graphs with one edge removed. 

Finding the exact value of Ramsey numbers is a challenging problem. This line of research already started in 1955 with the computation of \(R(K_3,K_4)\) and \(R(K_3,K_5)\)~\cite{greenwood1955combinatorial}, but in the meantime only a handful of new classical Ramsey numbers have been fully determined. For most cases, there are only lower and upper bounds.

For classical Ramsey numbers, the last exact value was determined by McKay and Radziszowski in 1995~\cite{R45} when they showed that \( R(K_4,K_5)=25 \). The most recent improvement on a bound for a (small) classical Ramsey number dates from 2018 when Angeltveit and McKay~\cite{angeltveitmckay} improved the upper bound $R(K_5,K_5) \leq 49$ (which had been standing since 1997~\cite{mckay1997subgraph}) to 48.

For Ramsey numbers involving \( J_k \), the most recently obtained exact value was \( R(K_3,J_{10})=37 \), which was determined by Goedgebeur and Radziszowski in 2013~\cite{staszek13-2}.

An overview of more values and bounds for Ramsey numbers with small parameters can be found in Radziszowski's dynamic survey~\cite{dynSur}.

The lower bounds on small Ramsey numbers are very often derived from Ramsey graphs which result from heuristic searches, for example using simulated annealing or tabu search as was done in~\cite{exoo2012ramsey, exoo2013some}. 
These methods are suitable to find graphs that have no apparent structure, but often fail to generalise to larger cases. In this article however, we will not use heuristic algorithms. Instead we will search for Ramsey graphs in a more constructive and exhaustive way by designing efficient algorithms to generate all circulant and block-circulant Ramsey graphs for various parameters.

This article is organised as follows. In Section~\ref{sect:UB} we improve the upper bound of \(\ram{5}{6}\) from 38 (which was established by Lidick{\'y} and Pfender in~\cite{SemiDef})  to 37 using a gluing technique. Edge-counting restrictions will allow to do this without the help of computers. 

In Section~\ref{sect:LB} we present exhaustive algorithms to generate circulant and block-circular Ramsey graphs. Using our implementation of these algorithms, we manage to improve lower bounds on a large variety of Ramsey numbers. By combining this with the new upper bounds from Section~\ref{sect:UB}, we determine the value of two new Ramsey numbers: \( \ram{5}{6}=37 \) and \( \ram{5}{7}=65 \). (The previous bounds for these Ramsey numbers were $31 \leq \ram{5}{6} \leq 38$ and $40 \leq \ram{5}{7} \leq 65$ and these previous lower and upper bounds were established in~\cite{Exoo2000} and~\cite{SemiDef}, respectively).
Moreover, we also show that the 64-vertex extremal Ramsey graph for \( \ram{5}{7} \)  is unique. 

Our algorithms also allow to establish new lower bounds and exact values of Ramsey numbers involving wheel graphs and complete bipartite graphs, including: $R(W_7,W_4) = 21$, $R(W_7,W_7) = 19$, $R(K_{3,4},K_{3,4})=25$, and $R(K_{3,5}, K_{3,5})=33$, which is also discussed in more detail in Section~\ref{sect:LB}.

Finally, we end this article in Section~\ref{sect:further_research} with an open problem and some suggestions for further research.

\section{Improving upper bounds on Ramsey numbers}
\label{sect:UB}

The primary goal of this section is to improve the upper bound on \( \ram{5}{6} \) from 38 to 37, for which we will use a variation of the well-known gluing method. The previous upper bound was established by Lidick{\'y} and Pfender in~\cite{SemiDef}.

\subsection{Definitions and preliminaries}
\label{subsect:UB_prelim}

Let \( G=(V,E) \) be a complete graph whose edges are 2-coloured by the function $c:E\rightarrow\{1,2\}$. We now define:
\begin{itemize}
	\item  \( \neigh{G}{i}{v} := G[\{w \in V \ |\ c(\{v,w\})=i \}]\), i.e.\ the subgraph of $G$ induced by all vertices $w$ adjacent to $v$ for which the edge $\{v,w\}$ has colour  $i$, with the same colouring $c$.
	\item \( \deg_i(v) := |\neigh{G}{i}{v}| \)
	\item \( e_{i}(G) := |\{e \in E\ | \ c(e)=i \}| \)
	\item \(\bar n_j :=|\{v\in V \ |\ \deg_1(v)=j \}| \), i.e.\ the number of vertices having degree $j$ in colour~1.
	\item \( E_{i}(J_k,J_l;n) := \max\{ e_{i}(G)\  |\  G \in \RamSet(J_k,J_l;n) \} \), for \( n < R(J_k,J_l) \)
\end{itemize}

Let $G$ be a \( (J_k,J_l;n) \)-graph, then for an arbitrary vertex $v$, \( \neigh{G}{1}{v} \) is a \( (J_{k-1},J_l;n_1) \)-graph and \( \neigh{G}{2}{v} \) is a \( (J_{k},J_{l-1};n_2) \)-graph, where \( n_1+n_2=n-1 \).
Gluing methods reverse this process and go from these neighbourhoods to the completely-coloured graph by colouring the edges in between the neighbourhoods (see Figure~\ref{fig:glueing}). This method has been used many times to compute upper bounds on Ramsey numbers, for example for \( R(K_4,K_5) \)~\cite{R45}. 

To use this glueing strategy for \( \ram{5}{6} \) we need appropriate collections of \( (J_4,J_6) \)-graphs and \( (J_5,J_5) \)-graphs. More specifically for \(\RamSet(J_5,J_6;37)\) we have \( n_1+n_2 = 36 \), but also \( n_1<17 = R(J_4,J_6) \)~\cite{mcnamara1991ramsey} and \( n_2<22=R(J_5,J_5) \)~\cite{clapham1989ramsey}.  Therefore we only need the sets with \( n_1 \geq 15 \) and \( n_2 \geq 20 \).
The first set was fully computed by Radziszowski~\cite{sprCom}. The second set was computed for orders 20 and 21 in~\cite{SRK5e}.

We independently verified the correctness of the first set of graphs by writing a plugin for the generator \verb|geng|~\cite{nauty-website, mckay_14} to generate $(J_4,J_6)$-graphs.
For the mentioned \( (J_5,J_5; n) \)-graphs, we wrote a gluing algorithm based on the techniques explained below, and generated all Ramsey graphs for $19\leq n \leq 21$.
 
The results can be found in Table~\ref{table:countJ4J6} and Table~\ref{table:countJ5J5} in the Appendix.

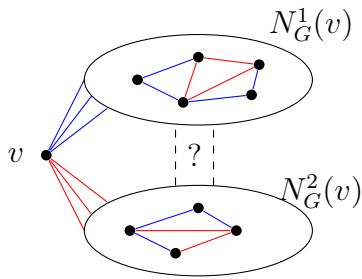
\begin{figure}
	\centering
	\begin{tikzpicture}
	
	\path[every node/.append style={circle, fill=black, minimum size=4pt, label distance=0pt, inner sep=0pt}]
	(0.5,0) node[label={[label distance=5pt]180:\(v\)}] (v) {}
	(1.7,1) node (0) {}
	(3.2,0.8) node (1) {}
	(2.5,1.3) node (2) {}
	(3.3,1.2) node (3) {}
	(2.3,0.7) node (4) {}
	
	(2.5,-0.7) node (A) {}
	(1.6,-1) node (B) {}
	(3,-1) node (C) {}
	(2.2,-1.3) node (D) {};
	
	\draw (3) edge[red] (2) edge[red] (4);
	\draw (2) edge[red] (4);
	\draw (0) edge[blue] (2) edge[blue] (4);
	\draw (1) edge[blue] (4) edge[blue] (3);
	\draw (A) edge[blue] (B) edge[blue] (C);
	\draw (B) edge[red] (C) edge[blue] (D);
	\draw (D) edge[red] (C);
	
	\draw (v) -- (1,1) [blue];
	\draw (v) -- (1.1,0.79) [blue];
	\draw (v) -- (1.3,0.62) [blue];
	\draw (v) -- (1,-1) [red];
	\draw (v) -- (1.1,-0.79) [red];
	\draw (v) -- (1.3,-0.62) [red];
	\draw (2.2,-0.4) -- (2.2,0.4) [dashed];
	\draw (2.7,-0.4) -- (2.7,0.4) [dashed];
	\draw (2.45, 0) node {?};
	
	\draw[] (2.5, 1) ellipse (1.5 and 0.6) node[label={[label distance=20pt]25:$N_G^1(v)$}] {};
	\draw[] (2.5, -1) ellipse (1.5 and 0.6)node[label={[label distance=22pt]10:$N_G^2(v)$}] {};
	\end{tikzpicture}
	\caption{Illustration of the glueing approach.}
	\label{fig:glueing}
\end{figure}

\subsection{Triangle constraints}
\label{subsect:UB_tf}

To prove the upper bound \( \ram{5}{6} \leq 37 \), we need to consider every pair of graphs \( G_1 \in \RamSet(J_4,J_6;n_1) \) and \( G_2 \in \RamSet(J_5,J_5;n_2) \) (with $n_1+n_2=36$), and use them as \( \neigh{G}{1}{v} \) and \( \neigh{G}{2}{v} \) respectively. In this way we obtain a graph which is completely coloured except for the edges between \( \neigh{G}{1}{v} \) and \( \neigh{G}{2}{v} \). If we can show that for every such pair it is impossible to colour these remaining edges without creating a \( J_5 \) in the first colour or a \( J_6 \) in the second, we have proved the theorem.    

Not all of these pairs $(G_1, G_2)$ have to be considered. By counting the number of monochromatic triangles in the hypothetical Ramsey graphs resulting from such a gluing, we can eliminate certain pairs.

Let $G$ be a two-coloured $K_n$. If the degrees of the vertices in each colour are known, the number of monochromatic triangles \( T(G) \) in $G$ can be computed using the following theorem by Goodman.

\begin{lemma}[Goodman~\cite{Goodman}] Let $G$ be a two-coloured $K_n$. Then
	\label{thm:Goodman}
	\[
	T(G) =\binom{n}{3} - \frac{1}{2}\sum_{i = 0}^{n-1}[\bar n_i\cdot i \cdot (n-1-i)]
	\]
\end{lemma}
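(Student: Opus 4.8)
The plan is to count triangles by complementary counting. Since $K_n$ has exactly $\binom{n}{3}$ triangles in total, it suffices to count the \emph{non-monochromatic} triangles (those receiving both colours) and subtract. So I would introduce $B(G)$ for the number of such two-coloured triangles, observe immediately that $T(G)=\binom{n}{3}-B(G)$, and reduce the whole statement to establishing $B(G)=\frac{1}{2}\sum_{i=0}^{n-1}\bar n_i\cdot i\cdot(n-1-i)$.

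The key device is to count \emph{bichromatic angles}, by which I mean a vertex together with an unordered pair of incident edges of different colours (equivalently, a path of length two centred at that vertex whose two edges receive distinct colours). First I would count these angles vertex by vertex. A fixed vertex $v$ has $\deg_1(v)$ incident edges of colour $1$ and, since every edge of $K_n$ is coloured, $\deg_2(v)=n-1-\deg_1(v)$ incident edges of colour $2$; choosing one edge of each colour shows that the number of bichromatic angles at $v$ is $\deg_1(v)\cdot(n-1-\deg_1(v))$. Grouping the vertices according to their colour-$1$ degree then yields the global count $\sum_{i=0}^{n-1}\bar n_i\cdot i\cdot(n-1-i)$.

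The crux of the argument is the double-counting step relating this total to $B(G)$. I would argue that a monochromatic triangle contributes no bichromatic angle, whereas every two-coloured triangle contributes exactly two: over two colours a triangle must split its three edges as $2+1$, so at the single vertex incident to the two like-coloured edges the angle is monochromatic, while at each of the remaining two vertices the two incident edges differ in colour, giving precisely two bichromatic angles. Hence the total number of bichromatic angles equals $2B(G)$, so that $B(G)=\frac{1}{2}\sum_{i=0}^{n-1}\bar n_i\cdot i\cdot(n-1-i)$, and substituting into $T(G)=\binom{n}{3}-B(G)$ completes the proof.

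The only delicate point, and the step I would phrase most carefully, is this ``exactly two'' claim, since the entire formula rests on the correspondence between bichromatic angles and two-coloured triangles being two-to-one. That correspondence is immediate once one notes that with only two available colours a non-monochromatic triangle is forced into the $2+1$ pattern; no genuine computation is involved, so the remaining work is purely to state the angle-to-triangle count unambiguously.
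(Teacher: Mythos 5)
Your proof is correct and complete: the vertex-by-vertex count of bichromatic angles, the observation that each non-monochromatic triangle is forced into the $2{+}1$ colour pattern and hence carries exactly two such angles (while monochromatic triangles carry none), and the resulting two-to-one correspondence together give exactly Goodman's formula. The paper itself offers no proof of this lemma --- it is stated with a citation to Goodman --- and your argument is precisely the standard double-counting proof from the literature, so there is no divergence to report.
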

\noindent
On the other hand, the number of monochromatic triangles in $G$ can also be counted by considering the number of edges coloured with colour $i$ (for $i \in \{1,2\}$) in the neighbourhood $\neigh{G}{i}{v}$ of each vertex~$v$.

\begin{lemma}
	\label{thm:triangles}
	\[
	T(G) = \frac{1}{3}\sum_{v \in V}{[ e_1(\neigh{G}{1}{v}) + e_2(\neigh{G}{2}{v})] }
	\]
\end{lemma}
\begin{proof}
	Each edge with colour $i$ (for $i \in \{1,2\}$) in \( \neigh{G}{i}{v} \) extends to a monochromatic triangle with colour $i$ when combined with $v$. Summing over all vertices counts each triangle three times. 
\end{proof}

From the definitions it is clear that if $G = (V, E)$ is a \( (J_k,J_l;n) \)-graph, then \( \forall \, v \in V: \ e_1(\neigh{G}{1}{v})\leq E_1(J_{k-1},J_l;\deg_1(v)) \), and similarly for the second colour.
For a \( (J_k,J_l) \)-graph, the difference in number of edges from the ``extremal'' case is expressed in the following \textit{deficiency function}:
\[
\delta_G(v) :=  E_1(J_{k-1},J_l;\deg_1(v)) - e_{1}(\neigh{G}{1}{v}) + E_2(J_k,J_{l-1};\deg_2(v)) - e_2(\neigh{G}{2}{v})
\] 
It follows for a \( (J_k,J_l) \)-graph $G$ that \(\forall \, v \in V: \delta_G(v)\geq 0\) and therefore also that \( \sum_{v \in V}{\delta_G(v)}\geq 0 \).  

Combining the previous results leads to (see~\cite{SRK5e} for details):
\begin{equation}\label{eq_sum_delta}
\sum_{v \in V}{\delta_G(v)} 
= - 3\binom{n}{3} + \sum_{i = 0}^{n-1}
{ \bar n_i [E_1(J_{k-1},J_{l};i) + E_2(J_k,J_{l-1};n-1-i) + \frac{3i\cdot(n-1-i)}{2}] }
\end{equation}
Here $n$ denotes the order of $G$. This gives us the sum of the deficiencies as a function of only the degree sequence of $G$ (given that we know the values of $E_1$ and $E_2$). If this sum is negative, then such a Ramsey graph cannot exist.


\begin{theorem}
	\label{thm:ub_r56}
	\( \ram{5}{6} \leq 37 \).
\end{theorem}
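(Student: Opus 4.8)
The plan is to assume, for contradiction, that a $(J_5,J_6;37)$-graph $G$ exists and to show that the deficiency sum in Equation~\eqref{eq_sum_delta} is then forced to be negative, which is impossible because $\sum_{v}\delta_G(v)\geq 0$ always holds for a Ramsey graph.

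First I would pin down the degree sequence. Fix a vertex $v$. As recalled in Section~\ref{subsect:UB_prelim}, $\neigh{G}{1}{v}$ is a $(J_4,J_6;\deg_1(v))$-graph and $\neigh{G}{2}{v}$ is a $(J_5,J_5;\deg_2(v))$-graph, with $\deg_1(v)+\deg_2(v)=36$. Since $R(J_4,J_6)=17$ we get $\deg_1(v)\le 16$, and since $R(J_5,J_5)=22$ we get $\deg_2(v)\le 21$, i.e.\ $\deg_1(v)\ge 15$. Hence every vertex satisfies $\deg_1(v)\in\{15,16\}$, so that $\bar n_{15}+\bar n_{16}=37$ and all other $\bar n_i$ vanish. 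This squeezing of the colour-1 degree into just two values, forced purely by the two smaller Ramsey numbers, is the structural heart of the argument.

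Second, I would substitute this into Equation~\eqref{eq_sum_delta} with $n=37$, $k=5$, $l=6$. Writing $a:=\bar n_{15}$ and $\bar n_{16}=37-a$, the right-hand side becomes
\[
\sum_{v}\delta_G(v)=-3\binom{37}{3}+a\Big[E_1(J_4,J_6;15)+E_2(J_5,J_5;21)+\tfrac{3\cdot15\cdot21}{2}\Big]+(37-a)\Big[E_1(J_4,J_6;16)+E_2(J_5,J_5;20)+\tfrac{3\cdot16\cdot20}{2}\Big].
\]
This is an affine function of the single integer parameter $a\in\{0,1,\dots,37\}$, so it attains its maximum at one of the endpoints $a=0$ or $a=37$. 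It therefore suffices to evaluate it there and verify that it is strictly negative in both cases, whence it is negative for every admissible $a$. The four extremal edge counts $E_1(J_4,J_6;15)$, $E_1(J_4,J_6;16)$, $E_2(J_5,J_5;20)$ and $E_2(J_5,J_5;20)$ I would read off from the complete enumerations of $\RamSet(J_4,J_6;n)$ and $\RamSet(J_5,J_5;n)$ described in Section~\ref{subsect:UB_prelim} (the appendix tables), since $E_1$ and $E_2$ require the \emph{maximum} edge count over the whole family, not merely that one graph exists. Plugging these values in makes both endpoints negative, contradicting $\sum_v\delta_G(v)\ge 0$.

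I expect the main obstacle to be precisely this last ingredient: the argument collapses unless the true extremal values are small enough to drive \emph{both} endpoints below zero, and pinning them down exactly relies on the exhaustive (rather than heuristic) generation of the two smaller Ramsey families. A secondary point requiring care is the verification that the degree restriction genuinely admits only the two values $15$ and $16$, so that the sum over $i$ in Equation~\eqref{eq_sum_delta} really does collapse to an affine function of $a$ and the two-endpoint check is exhaustive; once these are in place, the final contradiction is a short computer-free calculation.
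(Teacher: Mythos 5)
Your setup reproduces the paper's exactly up to the last step: the colour-1 degrees are pinned to $\{15,16\}$ via $R(J_4,J_6)=17$ and $R(J_5,J_5)=22$, the degree sequence is fed into Equation~(\ref{eq_sum_delta}), and the extremal edge counts are taken from the exhaustive enumerations ($E_1(J_4,J_6;15)=45$, $E_1(J_4,J_6;16)=50$, $E_2(J_5,J_5;20)=100$, $E_2(J_5,J_5;21)=105$; note a slip in your write-up, where $E_2(J_5,J_5;20)$ is listed twice and $E_2(J_5,J_5;21)$ is the value actually needed in the coefficient of $\bar n_{15}$). But your concluding step fails on the numbers: the affine function is $\sum_{v}\delta_G(v) = -23310 + 622.5\,\bar n_{15} + 630\,\bar n_{16}$, and at the endpoint $\bar n_{16}=37$ it equals $-23310 + 37\cdot 630 = 0$ \emph{exactly}, not a strictly negative value. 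So the deficiency inequality $\sum_v \delta_G(v)\ge 0$ rules out every degree sequence except the one where all 37 vertices have colour-1 degree 16, and for that sequence it yields no contradiction whatsoever. Your stated expectation that ``both endpoints are negative'' is simply false, and the plan collapses there.

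The missing idea is the paper's second, independent counting argument that disposes of this surviving case. Since the total deficiency is $0$ and each $\delta_G(v)\ge 0$, every vertex must have deficiency exactly $0$, forcing $e_1(\neigh{G}{1}{v}) = E_1(J_4,J_6;16) = 50$ for every $v$. By Lemma~\ref{thm:triangles}, each colour-1 edge inside $\neigh{G}{1}{v}$ forms a colour-1 triangle with $v$ and every such triangle is counted three times, so $G$ would contain exactly $37\cdot 50/3$ monochromatic colour-1 triangles --- which is not an integer, the desired contradiction. Without this divisibility punchline (or some substitute analysis of the would-be deficiency-zero regular graph), the deficiency-sum machinery alone proves only $\ram{5}{6}\le 38$-type information and stops one step short of the theorem.
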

\begin{proof}
	We use the framework outlined above.
	From the counts in Table~\ref{table:countJ4J6} and Table~\ref{table:countJ5J5} it follows that: 
	
	\begin{tabular}{l l}
		\( E_1(J_4,J_6;15)=45 \), & \( E_1(J_4,J_6;16)=50 \), \\
		\( E_2(J_5,J_5;20)=100 \), & \( E_2(J_5,J_5;21)=105 \). \\
	\end{tabular}

	\noindent
	Let $G = (V,E)$ be a \( (J_5,J_6;37) \)-graph, then -- as already mentioned in Section~\ref{subsect:UB_prelim} -- every vertex of $G$ has either 15 or 16 neighbours in the first colour.
	Equation~(\ref{eq_sum_delta}) now leads to:
	\begin{align*}
	\sum_{v \in V}{\delta_G(v)} 
	&= - 23310 +  \bar n_{15}\cdot (45 + 105 + 472.5) +  \bar n_{16}\cdot (50 + 100 + 480) \\
	&= - 23310 + \bar n_{15}\cdot 622.5 + \bar n_{16} \cdot 630
	\end{align*}
	Under the given constraints, this sum is maximal for $\bar n_{16}=37$ and gives \( \sum_{v \in V}{\delta_G(v)} = 0 \); all other combinations lead to a negative sum.
	
	Therefore the only remaining possibility for a \( (J_5,J_6;37) \)-graph $G$ is one where every vertex $v$ has deficiency $0$, and 16 neighbours in the first colour.
	 Hence for each $v \in V$, \( \neigh{G}{1}{v} \) contains exactly 50 edges with colour~1. Since every of those edges leads to a monochromatic triangle with colour~1 in $G$ and we count every triangle three times, there must be exactly \( \frac{37\cdot50}{3} \) monochromatic triangles with colour~1 in $G$. However, this is not an integer, hence $G$ does not exist.
\end{proof}

With this new upper bound, the classical inequality on Ramsey numbers yields the following Corollary.

\begin{corollary}\label{corr:j5j7}
$R(J_5,J_7) \leq R(J_4,J_7) + R(J_5,J_6) \leq 28 + 37 = 65.$
\end{corollary}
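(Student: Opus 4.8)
The plan is to recognise Corollary~\ref{corr:j5j7} as a single instance of the classical recursive bound
$R(J_k,J_l)\le R(J_{k-1},J_l)+R(J_k,J_{l-1})$,
specialised to $k=5$, $l=7$, followed by a numerical substitution. So the real content is establishing this recursion for the $J$-graphs; once that is in hand the statement reduces to inserting the known value $R(J_4,J_7)=28$ from~\cite{dynSur} and the upper bound $R(J_5,J_6)\le 37$ just proved in Theorem~\ref{thm:ub_r56}.

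To prove the recursion, I would set $n=R(J_{k-1},J_l)+R(J_k,J_{l-1})$ and take an arbitrary $2$-colouring of $K_n$; the goal is to force a blue $J_k$ or a red $J_l$. Fix any vertex $v$. Since $\deg_1(v)+\deg_2(v)=n-1$, the pigeonhole principle guarantees that either $\deg_1(v)\ge R(J_{k-1},J_l)$ or $\deg_2(v)\ge R(J_k,J_{l-1})$ (if both failed, the degrees would sum to at most $n-2$). In the first case the blue neighbourhood $\neigh{G}{1}{v}$ is a $2$-coloured complete graph on at least $R(J_{k-1},J_l)$ vertices, hence contains a red $J_l$ (and we are done) or a blue $J_{k-1}$; in the latter subcase, adjoining $v$ --- whose edges into $\neigh{G}{1}{v}$ are all blue --- turns this blue $K_{k-1}-e$ into a blue $K_k-e=J_k$, since the single missing edge is preserved. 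The second case is symmetric, producing either a blue $J_5$ directly or a red $J_6$ that, together with the red-adjacent apex $v$, becomes a red $J_7=K_7-e$. This is precisely the neighbourhood-reversal already recorded before Figure~\ref{fig:glueing}, read in the forward direction.

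With the recursion established, the corollary follows immediately: $R(J_5,J_7)\le R(J_4,J_7)+R(J_5,J_6)\le 28+37=65$. I expect no serious obstacle here --- the argument is a textbook Erd\H{o}s--Szekeres induction step --- so the only points demanding care are bookkeeping rather than ideas: getting the pigeonhole inequality exactly tight (so that \emph{at least} one neighbourhood is guaranteed large enough), and verifying the apex-attachment step, namely that joining a monochromatic universal vertex to a $J_{k-1}$ yields $J_k$ with its unique non-edge intact. I would also flag the dependence on the external value $R(J_4,J_7)=28$: the bound $65$ is only as strong as that input, so it is worth citing its source explicitly.
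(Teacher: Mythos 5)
Your proposal is correct and matches the paper exactly: the paper proves this corollary by simply invoking ``the classical inequality on Ramsey numbers'' $R(J_5,J_7)\le R(J_4,J_7)+R(J_5,J_6)$ together with Theorem~\ref{thm:ub_r56} and the known value $R(J_4,J_7)=28$ from~\cite{mcnamara1991ramsey}, which is precisely the recursion you spell out. Your pigeonhole argument, including the key observation that joining a monochromatic universal vertex to a $J_{k-1}$ yields $J_k$ with its non-edge intact, is the standard proof of that inequality and fills in exactly the step the paper leaves implicit.
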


It should be noted that this upper bound $R(J_5,J_7) \leq 65$ was recently already established in~\cite{SemiDef} by Lidick{\'y} and Pfender using computational techniques. In the next section we will show that the upper bounds $R(J_5,J_6) \leq 37$ and $R(J_5,J_7) \leq 65$ are tight.

\section{Improving lower bounds on Ramsey numbers}
\label{sect:LB}

To establish a lower bound on a Ramsey number, it suffices to construct a single Ramsey graph. In this section we try to find new lower bounds based on circulant and block-circulant graphs, and we do this in an exhaustive way.

\subsection{Circulant graphs}
\label{subsect:circ}

A graph is called \textit{circulant} if it is of the form \( G=(V,E) \) where \( V=\{0,\dots,n-1\} \) and \( \{i,j\}\in E \Leftrightarrow (i-j)\pmod n \in D\) for some \( D \subseteq \{1,\dots,n-1\} \), which is closed under additive inverses modulo $n$.
The adjacency matrix of such a graph has the property that every row can be obtained by rotating the preceding row by one position (always in the same direction). This is also called a \textit{circulant matrix}. The set $D$ is called the \textit{generating set} of this matrix. To see this as a Ramsey graph, let all edges of $G$ be blue and those in the complementary graph (which is also circulant) be red, which yields a two-coloured complete graph. 
For classical Ramsey numbers, \( R(K_3,K_3) \), \( R(K_3,K_4) \), \( R(K_3,K_5) \), \( R(K_3,K_9) \), \( R(K_4,K_4) \), and \( R(K_4,K_5) \) all have extremal graphs which are circulant. In some cases these graphs are even unique as extremal graphs, e.g.\ for $R(K_3,K_9)$~\cite{staszek13}.

In this work, we designed and implemented an algorithm which enumerates all circulant Ramsey graphs for various parameters.
It is based on a backtracking algorithm and exploits circularity to speed up the search. For example, to determine if a certain clique is present in the graph, one can limit this to cliques containing certain ``canonical'' edges (see~\cite{thesissteven} for more details). The structure in the adjacency matrix of these graphs also allows to perform bitwise operations to accelerate the search.

No improvements on lower bounds of classical Ramsey numbers were found, agreeing with what was reported in~\cite{HaKr}. Kuznetsov~\cite{Kuznetsov} also calculated the best-possible bounds for certain classical Ramsey numbers within the class of \textit{distance graphs}, a generalisation of circulant graphs.

Our algorithm was extended to generate circulant graphs for \( R(H_1,H_2) \), where \( H_i \) is one of the following graphs:
\( K_n \), \( J_n \), \( C_n \), \( W_n \), \( K_{n,m} \). 
Here \(C_n\) denotes a cycle of length $n$, \(W_n\) is a wheel graph on $n$ vertices (i.e.\ a graph obtained by connecting a single vertex to all vertices of a $C_{n-1}$), and $K_{n,m}$ is the complete bipartite graph with partite sets of orders $n$ and $m$.
For these cases, several lower bounds could be improved (shown in Table~\ref{tbl:dropEdge} and Table~\ref{tbl:otherLower} from Section~\ref{subsect:results}), but most of them were later further improved using block-circulant graphs (see Section~\ref{subsect:blockcirc}).
Our algorithm is also suitable to search for multi-colour Ramsey graphs.

\begin{claim}
None of the lower bounds for Ramsey numbers of the form \( R(H_1,H_2) \) with $H_i \in \{ K_n, J_n, C_n, W_n, K_{n,m} \}$ (for $i \in \{1,2\}$) reported in Table~\ref{tbl:dropEdge} or Table~\ref{tbl:otherLower}, or mentioned in any table in~\cite{dynSur} can be improved using circulant Ramsey graphs on 64 or fewer vertices.
\end{claim}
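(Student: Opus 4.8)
The statement is a non-existence (exhaustive) assertion, so the proof is necessarily computational: for every in-scope pair $(H_1,H_2)$ I would run the circulant generator of this section over all admissible orders and verify that nothing better than the recorded lower bound turns up. Concretely, suppose the best bound currently known is $R(H_1,H_2)\ge m$, so that some $(H_1,H_2;m-1)$-graph exists. A circulant graph improves this bound only if there is a circulant $(H_1,H_2;n)$-graph with $m\le n\le 64$, since such a graph yields $R(H_1,H_2)\ge n+1\ge m+1$. The plan is therefore to show, for each pair and each $n$ in the range $m\le n\le 64$, that $\RamSet(H_1,H_2;n)$ contains no circulant graph.

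The first ingredient is completeness of the enumeration. A circulant graph on $V=\{0,\dots,n-1\}$ is determined by a generating set $D\subseteq\{1,\dots,n-1\}$ closed under $x\mapsto -x\pmod n$; I would parametrise the search by these sets, coloured blue inside $D$ and red outside. I would argue that the backtracking search visits at least one representative of every such generating set, and that its pruning is sound: a partial assignment is discarded only once it already contains a forbidden blue $H_1$ or red $H_2$, and by monotonicity of the subgraph relation this obstruction survives in every completion, so no valid Ramsey graph is ever lost. To shrink the search I would additionally quotient by the multiplier maps $x\mapsto\alpha x$ with $\gcd(\alpha,n)=1$ (and by colour-complementation, which swaps the roles of $H_1$ and $H_2$), keeping one canonical representative per orbit; the key correctness point is that these maps preserve the property of being an $(H_1,H_2;n)$-graph, so discarding non-canonical representatives cannot hide an improvement.

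The second ingredient is making the searches terminate. Here I would exploit circulancy exactly as the section describes: by rotational symmetry the search for a monochromatic copy of $H$ may be restricted to copies through a fixed canonical edge, cutting the cost of each subgraph test, while the row structure of the circulant adjacency matrix lets the neighbourhood tests be executed with bitwise operations. With these speed-ups the full range $n\le 64$ becomes tractable for each of the families $K_n,J_n,C_n,W_n,K_{n,m}$. Running the generator over all in-scope pairs drawn from Table~\ref{tbl:dropEdge}, Table~\ref{tbl:otherLower} and the tables of~\cite{dynSur}, I would record the largest order of a circulant Ramsey graph found for each pair and confirm that it never exceeds the corresponding known lower bound.

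The main obstacle is twofold. First, the scale: before symmetry reduction there are on the order of $2^{n/2}$ generating sets for $n$ close to $64$, so the credibility of the claim rests entirely on the pruning and canonical-form reductions being both effective and provably lossless---the delicate part is certifying that no orbit representative is ever skipped while still discarding enough to finish. Second, the breadth of scope: the assertion ranges over every pair with arguments in $\{K_n,J_n,C_n,W_n,K_{n,m}\}$ appearing in any table of~\cite{dynSur}, so I would also need to fix precisely which Ramsey numbers fall in scope and independently cross-check the generator (for instance by re-deriving a handful of known circulant extremal graphs) in order to trust a purely negative, machine-produced conclusion.
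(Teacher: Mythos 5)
Your proposal is correct and matches the paper's approach: the claim is established purely computationally, by the exhaustive backtracking generator over generating sets described in Section~\ref{subsect:circ} (with canonical-edge restriction of subgraph tests, bitwise acceleration, and symmetry/multiplier reductions), run for every in-scope pair and every order up to 64, together with the independent cross-checks of Section~\ref{subsect:testing}. You also correctly handle the one subtle point --- that each order $n$ must be searched separately, since vertex-deleted subgraphs of circulant graphs need not be circulant --- so there is nothing missing relative to the paper's (implicit) argument.
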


\subsection{Block-circulant graphs}
\label{subsect:blockcirc}
The structure of circulant graphs turns out to be too restrictive to improve challenging lower bounds on small Ramsey numbers. 
Therefore we also considered a natural generalisation of circulant graphs: block-circulant graphs. 
These are graphs of which the adjacency matrix is composed of equally-sized circulant matrices:

\[ A = 
\begin{bmatrix}
C_{11} & C_{12} & \dots & C_{1k}\\
C_{21} & C_{22} & \dots & C_{2k}\\
\vdots & \vdots & & \vdots\\
C_{k1} & C_{k2} & \dots & C_{kk}
\end{bmatrix}
\]

For this adjacency matrix to represent a simple graph, it is necessary that \( C_{i,j}=C_{j,i}^T \) for all \( i, j \). This is possible because the transposition of a circulant matrix is also circulant. If a graph $G$ has an adjacency matrix of the form of $A$, we say that it is a block-circulant graph on $k$ blocks. It is uniquely determined by giving the generating set (i.e.\ the first row) of each block in the upper triangle of $A$. Let \(D_{i,j}\) be this generating set of \(C_{i,j}\).

An example of a block-circulant matrix on 3 blocks would be
\[ A_1 = 
\begin{bmatrix}
(3,4,5,6) & (0,1,2)  & (0,2,4)\\
(0,7,8) & (2,3,6,7) & (0,4,8)\\
(0,5,7) & (0, 1, 5) & (1,3,6,8)
\end{bmatrix}_{9}
\text{,}
\]
where the brackets denote the generating sets, and the subscript indicates that we are working modulo 9. This represents the adjacency matrix of $O_6^-(2)$, the unique extremal graph for $R(J_4,J_7)$~\cite{mcnamara1991ramsey}.

These block-circulant graphs have been used before in search of lower bounds on Ramsey numbers. For example in~\cite{exoo1998some, Exoo2000} heuristic searches were performed for block-circulant Ramsey graphs for Ramsey numbers of the form \(R(K_k, K_l)\) and \(\ram{k}{l}\). They were also used to generate starting points for local search algorithms~\cite{ExT}.
However, we will follow an exhaustive approach and we will see that in some cases these earlier heuristic searches missed the best-possible values.

\paragraph*{}
The basic idea behind our exhaustive generation algorithm for block-circulant graphs is similar to the idea behind the algorithm for circulant graphs from Section~\ref{subsect:circ}: perform a backtracking search over the free parameters of the graph, in this case the generating sets of the circulant matrices in the upper triangle of the adjacency matrix.
Pruning is done whenever a forbidden subgraph $H_1$ or $H_2$ is formed (when searching for Ramsey graphs for $R(H_1,H_2)$), and the search for these subgraphs is restricted by a representative of the newly-coloured edges.

Extra care was taken to avoid generating isomorphic (partially-coloured) graphs.  As will be shown in the next paragraphs, many isomorphisms can be detected directly from the structure of the block-circulant graphs. We did not eliminate further sporadic isomorphisms.

In the following, let \( G \) be a block-circulant graph of order $n$ on $k$ blocks with an adjacency matrix as depicted at the beginning of this subsection.
\begin{lemma}
\label{lem:perm}
Let \( \pi: \range{k}\rightarrow \range{k} \) be a permutation.
Then the graph \( G' \) with adjacency matrix \( (C_{\pi(i),\pi(j)})_{i,j\in\range{k}} \) is also block-circulant and is isomorphic to $G$.
\end{lemma}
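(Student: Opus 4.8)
The plan is to verify two things: that $A' := (C_{\pi(i),\pi(j)})_{i,j\in\range{k}}$ is again a legitimate block-circulant adjacency matrix, and that the graph it defines is obtained from $G$ by a mere relabelling of vertices. For the first point, observe that every block $C'_{i,j} := C_{\pi(i),\pi(j)}$ of $A'$ is one of the original blocks of $A$ and is therefore circulant; moreover the symmetry requirement $C'_{i,j} = (C'_{j,i})^T$ follows immediately from the property $C_{a,b} = (C_{b,a})^T$ of $A$ applied with $a=\pi(i)$ and $b=\pi(j)$. Hence $A'$ is a valid block-circulant adjacency matrix on $k$ blocks, so $G'$ is block-circulant, with generating sets simply relabelled as $D'_{i,j} = D_{\pi(i),\pi(j)}$.

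For the isomorphism I would write $m := n/k$ for the common size of the circulant blocks and index the vertices of both $G$ and $G'$ by pairs $(i,r)$ with $i\in\range{k}$ (the block) and $r\in\{0,\dots,m-1\}$ (the position inside the block), so that $(i,r)$ is adjacent to $(j,s)$ exactly when the $(r,s)$-entry of the relevant block equals $1$. Define the vertex map $\sigma(i,r) := (\pi(i),r)$, which permutes the blocks according to $\pi$ while fixing the within-block position; since $\pi$ is a permutation and the position coordinate is untouched, $\sigma$ is a bijection. One then checks adjacency directly: $(i,r)$ and $(j,s)$ are adjacent in $G'$ iff the $(r,s)$-entry of $C'_{i,j}=C_{\pi(i),\pi(j)}$ is $1$, which is precisely the condition for $\sigma(i,r)=(\pi(i),r)$ and $\sigma(j,s)=(\pi(j),s)$ to be adjacent in $G$. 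Thus $\sigma$ preserves adjacency in both directions and is an isomorphism $G'\to G$. Equivalently, letting $P_\pi$ denote the $k\times k$ permutation matrix of $\pi$ and $Q := P_\pi\otimes I_m$ the associated $n\times n$ permutation matrix, the block permutation is exactly the conjugation $A' = Q^T A\,Q$, which exhibits the isomorphism at the level of adjacency matrices.

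There is no real obstacle here: the argument is entirely structural, and the only point requiring mild care is the bookkeeping of the two coordinates $(i,r)$, namely that permuting whole blocks acts as the identity on each within-block position and therefore leaves the internal circulant structure of every block intact. Consequently no edges are created or destroyed, and $G'\cong G$.
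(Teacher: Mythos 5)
Your proof is correct: the paper states Lemma~\ref{lem:perm} without any proof, treating it as immediate, and your argument --- each permuted block is still circulant, the symmetry $C'_{i,j}=(C'_{j,i})^T$ is inherited from $C_{a,b}=(C_{b,a})^T$, and the block permutation is realised by the vertex bijection $(i,r)\mapsto(\pi(i),r)$, i.e.\ conjugation of $A$ by the permutation matrix $P_\pi\otimes I_m$ --- is exactly the standard justification the authors leave implicit. The only cosmetic point is that whether the conjugation reads $Q^{T}AQ$ or $QAQ^{T}$ depends on the convention chosen for $P_\pi$; either way it is conjugation by a permutation matrix, so the isomorphism stands.
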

Most isomorphisms of this kind can be avoided by defining an ordering on every possible generating set and accepting an adjacency matrix only if the generating sets of the blocks on the diagonal are in non-decreasing order. That is: \( \forall \, i<j: D_{i,i}\leq D_{j,j} \), where ``$\leq$'' denotes the chosen ordering. Only when some blocks on the diagonal are equal, these isomorphisms can remain undetected.

Another form of structural isomorphism in block-circulant graphs originates from \\ ``rotating'' one block relative to the other blocks:
\begin{lemma}
\label{lem:rot}
	Let \( d \in \range{k}  \) and \( r \in \N \). Then the graph \( G' \) constructed from \( G \) by rotating every \( C_{i,d} \) $r$ times to the right, and \( C_{d,i} \) $r$ times to the left \( (i \neq d) \), is also block-circulant and is isomorphic to $G$ (where rotating $C_{i,d}$ means cyclically rotating each row of that submatrix). 
\end{lemma}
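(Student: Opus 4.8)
The plan is to realise the described block operation as an explicit relabelling of the vertices of $G$. Write $n=km$, so that each block $C_{i,j}$ is an $m\times m$ circulant matrix, and index the vertices of $G$ by pairs $(b,s)$ with $b\in\range{k}$ the block and $s\in\mathbb{Z}_m$ the position inside the block, so that $(b,s)$ and $(b',s')$ are adjacent exactly when $(s-s')\bmod m\in D_{b,b'}$. I would let $\sigma$ be the permutation of the vertex set that cyclically shifts the positions inside block $d$ by $r$ and fixes everything else, i.e.\ $\sigma(d,s)=(d,(s+r)\bmod m)$ and $\sigma(b,s)=(b,s)$ for $b\neq d$. Because $\sigma$ is a bijection of $V(G)$, the graph $G'$ obtained by relabelling $G$ along $\sigma$ is automatically isomorphic to $G$; it therefore remains only to check that the adjacency matrix $A'$ of $G'$ is the one described in the statement and that it is again block-circulant.

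Next I would compute $A'$ block by block from the identity $(C'_{b,b'})_{s,s'}=A_{\sigma^{-1}(b,s),\,\sigma^{-1}(b',s')}$, where $\sigma^{-1}$ shifts the position in block $d$ by $-r$. For a pair $b,b'\neq d$ neither index is affected and the block is unchanged. For the diagonal block $C_{d,d}$ both the row and the column index are shifted by $-r$, so the two shifts cancel and $C_{d,d}$ is left unchanged --- this is exactly why the statement excludes $i=d$. For a block $C_{d,i}$ in row $d$ (with $i\neq d$) only the row index is shifted, whereas for a block $C_{i,d}$ in column $d$ only the column index is shifted.

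The crux of the argument, and the step I expect to demand the most care, is to match these index shifts with the row-rotations in the statement and to pin down the directions. A cyclic shift of the column index is literally a horizontal rotation of every row, which immediately gives that each $C_{i,d}$ is rotated $r$ times to the right. A cyclic shift of the row index is a priori a vertical (column-wise) operation, but since each block is circulant its $(s,s')$ entry depends only on $(s-s')\bmod m$, so shifting the row index by $r$ produces the same matrix as rotating every row $r$ times in the opposite horizontal direction; tracking the sign shows that each $C_{d,i}$ is rotated $r$ times to the left, matching the statement. Every one of these operations merely translates the generating set $D_{b,b'}$ modulo $m$, so each transformed block is again circulant and $G'$ is block-circulant; the symmetry condition $C'_{i,j}=(C'_{j,i})^{T}$ then holds automatically, since $G'$ is a relabelling of the simple graph $G$ and hence is itself simple. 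The only genuinely delicate point is the bookkeeping of the left/right and row/column conventions, so that the ``opposite'' rotations imposed on row $d$ and column $d$ are seen to be transpose-compatible; once a convention is fixed this is a routine check.
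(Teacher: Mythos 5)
Your proposal is correct. The paper states Lemma~\ref{lem:rot} without proof, treating it as immediate, and your argument --- realising the operation as the vertex relabelling $\sigma(d,s)=(d,(s+r)\bmod m)$, observing that the row- and column-shifts cancel on the diagonal block $C_{d,d}$ (which is why $i=d$ is excluded), and using circularity to convert the row-index shift on $C_{d,i}$ into the stated opposite horizontal rotation --- is precisely the justification the paper leaves implicit, so it is essentially the same (and the natural) approach.
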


To avoid generating graphs which are isomorphic because of this reason, we fix a certain rotation of each block. For this it is mostly sufficient to demand that $C_{1,d}$, the first block of each column, is generated by a \textit{Lyndon word}, i.e.\ a bitstring which is the lexicographically smallest among all of its circular rotations. If there are multiple rotations of $C_{1,d}$ giving the same Lyndon word, then isomorphisms might still occur because of the other blocks $C_{d',d}$. These ties can then be broken by requiring $C_{2,d}$ to be lexicographically smallest among all rotations that fix $C_{1,d}$ etc.

\begin{lemma}
\label{lem:mult}
	If $q$ is co-prime with $n/k$ (i.e. $q\in \mathbb{Z}_{n/k}^{*}$), then applying \( D\mapsto q\cdot D :=\{q\cdot d \pmod {n/k} \ | \ d\in D\} \) to each \(D_{i,j}\) leads to a graph which is isomorphic to $G$.
\end{lemma}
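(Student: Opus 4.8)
The plan is to show that the map $D \mapsto q \cdot D$ (applied blockwise to every generating set $D_{i,j}$) is realised by an actual graph isomorphism, namely the vertex permutation induced by multiplication by $q$ on the index of each vertex within its block. Concretely, I would label the vertices so that vertex $(a,s)$ denotes the $s$-th vertex of block $a$, with $a \in \range{k}$ and $s \in \{0,\dots,n/k-1\}$. The block-circulant structure means that $\{(a,s),(b,t)\} \in E$ precisely when $(t-s) \bmod (n/k) \in D_{a,b}$ (up to the convention chosen for rows versus columns). I would then define $\varphi$ by $\varphi(a,s) = (a, q \cdot s \bmod (n/k))$, fixing the block index and multiplying the within-block index by $q$.

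The key step is to verify that $\varphi$ is a bijection and preserves adjacency. Bijectivity is immediate from the hypothesis $q \in \mathbb{Z}_{n/k}^{*}$: since $q$ is a unit modulo $n/k$, the map $s \mapsto q \cdot s \bmod (n/k)$ is a bijection on $\{0,\dots,n/k-1\}$, and applying the same bijection within each of the $k$ blocks gives a permutation of $V$. For adjacency preservation, I would compute that $\{\varphi(a,s),\varphi(b,t)\}$ is an edge exactly when $(q t - q s) \bmod (n/k) \in D_{a,b}$, i.e.\ when $q(t-s) \bmod (n/k) \in D_{a,b}$. This holds iff $(t-s) \bmod (n/k)$ lies in the preimage $q^{-1} D_{a,b}$; equivalently, relabelling, the graph $G'$ obtained by replacing each $D_{a,b}$ with $q \cdot D_{a,b}$ has $\varphi$ as an isomorphism onto $G$ (the direction of the correspondence is just a matter of whether one multiplies the generating sets or their preimages, which one can fix by reading the definition given in the statement).

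I would also note two consistency checks that keep the argument honest. First, one must confirm that $G'$ is still a well-defined simple graph, i.e.\ that the symmetry condition $C_{i,j} = C_{j,i}^{T}$ is preserved under $D \mapsto q \cdot D$; this follows because multiplication by $q$ commutes with taking additive inverses modulo $n/k$ (as $q \cdot (-d) = -(q \cdot d)$), so the transpose relation among blocks is respected. Second, the diagonal blocks must remain valid adjacency-free-of-loops circulants, which is automatic since $0 \in D_{i,i}$ is impossible and $q \cdot 0 = 0$, so no self-loops are introduced.

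The main obstacle is essentially bookkeeping rather than mathematical depth: one has to pin down a single consistent convention for how the generating set $D_{a,b}$ encodes edges between blocks $a$ and $b$ (rows versus columns, and the sign of the index difference) and then check that the within-block multiplication map respects that convention uniformly across all $k^2$ blocks simultaneously. Because $\varphi$ multiplies the within-block coordinate by the \emph{same} unit $q$ in every block, the scalar $q$ factors out of every difference $t-s$ identically, which is exactly why a single permutation handles all blocks at once; making this uniformity explicit is the crux of the write-up. Once the convention is fixed, the verification reduces to the elementary computation above.
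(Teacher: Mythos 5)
Your proof is correct and supplies exactly the natural argument that the paper leaves implicit (the lemma is stated there without an explicit proof): the uniform within-block multiplication permutation $\varphi(a,s)=(a,\,q\cdot s \bmod (n/k))$, whose bijectivity follows from $q\in\mathbb{Z}_{n/k}^{*}$ and which scales every index difference $t-s$ by the same unit $q$, so adjacency with respect to the sets $D_{i,j}$ corresponds precisely to adjacency with respect to the sets $q\cdot D_{i,j}$. Your two consistency checks (preservation of the transpose relation $D_{j,i}=-D_{i,j}$ since $q\cdot(-d)=-(q\cdot d)$, and absence of loops on the diagonal blocks) are exactly the right bookkeeping to confirm the image is again a well-defined block-circulant simple graph.
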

We can avoid nearly all isomorphisms of this kind by only proceeding the search if the sequence $(D_{1,1},\dots,D_{k,k})$ is the lexicographically smallest among all multiples \( (q\cdot D_{1,1},\dots,q\cdot D_{k,k}) \), with $q$ co-prime with $n/k$ (again using the ordering defined in Lemma~\ref{lem:perm}). Note that this criterion can already be checked in partially-filled matrices if it is used in combination with the criterion of Lemma~\ref{lem:perm}. Also notice that, since all blocks on the diagonal are symmetric, multiplication with $-1\in \mathbb{Z}_{n/k}^{*}$, will fix all blocks on the diagonal. Therefore, an extra condition can be added (looking at non-diagonal blocks), to decide whether we will accept this graph or its multiplication with $-1$.

\begin{theorem}
	\label{thm:canon}
	For every block-circulant graph $G$, there exists at least one block-circulant graph $G'\cong G$ that meets all of the above criteria.
\end{theorem}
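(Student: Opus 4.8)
The plan is to avoid treating the three normalisation criteria sequentially (which could cause a later operation to undo an earlier one) and instead exhibit a single distinguished representative of the isomorphism class as the minimum of a suitable total order. Concretely, let $\Gamma$ be the finite group generated by the three families of operations from Lemmas~\ref{lem:perm}, \ref{lem:rot} and~\ref{lem:mult}: the block permutations $\pi\in S_k$, the row/column rotations (equivalently, the shifts $s=(s_1,\dots,s_k)\in(\mathbb{Z}_{n/k})^k$ sending $D_{i,j}\mapsto D_{i,j}+(s_i-s_j)$ for a fixed sign convention), and the multipliers $q\in\mathbb{Z}_{n/k}^{*}$ acting by $D\mapsto q\cdot D$. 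By those three lemmas every element of the orbit $\Gamma\cdot G$ is block-circulant and isomorphic to $G$, and the orbit is finite. Fix the total order on generating sets from Lemma~\ref{lem:perm} and compare two block-circulant matrices lexicographically by listing their blocks in the order $D_{1,1},\dots,D_{k,k}$ first (the diagonal), then the off-diagonal blocks $D_{1,2},\dots,D_{1,k},D_{2,3},\dots$ in row-major order. I would then take $G'$ to be the $\preceq$-minimal element of $\Gamma\cdot G$; this exists by finiteness, and the claim is that $G'$ satisfies all of the stated criteria.

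The main work is then to verify each criterion from minimality. For the diagonal (the criteria coming from Lemmas~\ref{lem:perm} and~\ref{lem:mult}): if the diagonal of $G'$ were not non-decreasing, a transposition of two blocks would strictly lower the diagonal prefix and hence $G'$ in $\preceq$, a contradiction; and if some multiplier $q$ produced a lexicographically smaller diagonal sequence $(q\cdot D_{1,1},\dots,q\cdot D_{k,k})$, then applying that $q$ would again lower the diagonal prefix, contradicting minimality. So $G'$ meets both the ordering condition $D_{i,i}\le D_{j,j}$ and the lexicographic-multiplier condition. For the rotation criterion (Lemma~\ref{lem:rot}) I would use the key structural observation that shifts fix every diagonal block, since $D_{i,i}\mapsto D_{i,i}+(s_i-s_i)=D_{i,i}$; hence the subgroup of shifts leaves the already-minimised diagonal untouched. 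Within this subgroup the parameter $s_d$ controls the rotation of the whole column $d$, and lexicographic minimality forces $D_{1,d}$ to be the smallest among its rotations, i.e.\ a Lyndon word; the residual freedom (the period of $D_{1,d}$) is exactly what the tie-breaking via $D_{2,d}$ consumes, matching the stated greedy procedure. The extra $-1$-condition is likewise subsumed, since $\preceq$ already compares the off-diagonal blocks of a graph and its multiple by $-1$.

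The subtlety to watch, and the reason a naive sequential argument fails, is that the three operation families do not commute: multiplication by $q$ alters the diagonal blocks, whereas block permutations and (crucially) shifts do not, so normalising the off-diagonal structure after fixing a multiplier, or fixing a multiplier after sorting, could in principle spoil an earlier normalisation. Casting everything as a single minimisation over $\Gamma\cdot G$ with the diagonal-first priority order resolves this automatically: the highest-priority data (the diagonal) is settled first by the full group, and each lower-priority criterion is then obtained as minimality under the stabiliser of that data, which for the rotation step is exactly the shift subgroup by the observation above. I expect the only genuinely delicate point to be bookkeeping the coupling of the shifts $s_d$ across the blocks of a column when verifying that lexicographic minimality reproduces the precise Lyndon-plus-tie-break recipe; this is routine once one fixes $s_1=0$ to remove the global-shift redundancy and minimises the off-diagonal blocks in row-major order.
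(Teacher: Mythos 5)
Your proof is correct, but it takes a genuinely different route from the paper. The paper's proof is constructive and sequential: it exhibits an explicit four-step normalisation (multiply all blocks by the $q$ minimising the diagonal multiset, sort the diagonal by a block permutation, rotate each column into Lyndon form with the stated tie-breaks, then possibly multiply by $-1$ and re-rotate), and its correctness rests on exactly the non-interference facts you also identify -- rotations touch no diagonal block, and multiplication by $-1$ fixes the diagonal because those generating sets are closed under negation -- so each step preserves the invariants established by the earlier ones. You instead define the finite group $\Gamma$ generated by the operations of Lemmas~\ref{lem:perm}, \ref{lem:rot} and~\ref{lem:mult}, take the $\preceq$-minimum of the orbit $\Gamma\cdot G$ under a diagonal-first lexicographic order, and derive each criterion by an exchange argument (any violation would produce a strictly $\preceq$-smaller orbit element whose earlier positions are unchanged). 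Your verification is sound, including the delicate points: a column rotation fixing $C_{1,d}$ alters no row-major position before $D_{2,d}$, and the $-1$-condition follows since applying $-1$ and re-rotating preserves the diagonal and replaces $D_{1,2}$ by the Lyndon rotation of $-D_{1,2}$. What the paper's approach buys is the explicit algorithm actually run inside the generator; what yours buys is immunity to the interaction of the steps, since minimality over the whole orbit settles all criteria simultaneously. One framing remark of yours is overstated: the sequential argument does not ``fail'' -- it succeeds for precisely the structural reasons you use to organise the stabiliser argument (shifts and $-1$ stabilise the diagonal), which is why the paper can afford the step-by-step proof.
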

\begin{proof}
	Starting from $G$, we preform the following operations consecutively to make the labelling ``canonical'':
	\begin{itemize}
		\item Compute a $q \in \mathbb{Z}_{n/k}^{*}$ for which $\{q\cdot D_{i,i}\ | \ {1\leq i \leq k}\}$ is lexicographically minimal (seen as a multiset). Multiply all blocks with this $q$.
		\item Apply a permutation of the blocks, such that the blocks on the diagonal are in non-decreasing order. I.e.: sort the diagonal blocks.
		\item Rotate each column until all $C_{1,i}$ are generated by Lyndon words, $1<i\leq k$. If there are multiple rotations that minimise $C_{1,i}$, choose the one among them that minimises $C_{2,i}$, and so on.
		\item If $D_{1,2}$ is now bigger than the Lyndon rotation of $(-1)\cdot D_{1,2}$, multiply all blocks by $-1$ and repeat step 3.
	\end{itemize}
The resulting graph $G'$ is isomorphic to $G$ and is still block-circulant.
\end{proof}

To illustrate this process, note that the adjacency matrix $A_1$ depicted at the beginning of this subsection would pass the tests from Lemma~\ref{lem:rot} and Lemma~\ref{lem:mult}, but the generating sets on the diagonal are not in increasing order. Applying the operations described in Theorem~\ref{thm:canon} results in the following adjacency matrix, which is also the only one generated by our program for $R(J_4,J_7;28)$:
\[ A_1^* = 
\begin{bmatrix}
(1,3,6,8) & (0,1,5)  & (0,2,4)\\
(0,4,8) & (2,3,6,7) & (0,1,2)\\
(0,5,7) & (0,7,8) & (3,4,5,6)
\end{bmatrix}_{9}
\]

We now give counts of some concrete cases to give an indication of how many isomorphic graphs are avoided by the generator and how many remain. There are 32076 block-circulant Ramsey-$(J_4,J_8;27)$-graphs on three blocks. Of those, only 17 are non-isomorphic. With all of the above restrictions, our program generated 44 graphs.
Of the 26 block-circulant Ramsey-$(K_4,K_8;54)$-graphs generated on 3 blocks, 23 are non-isomorphic.
For block-circulant Ramsey-$(K_4,J_7;36)$-graphs on 4 blocks, the program outputs 2 graphs, which are non-isomorphic. 

\paragraph*{}
The order in which the edges are coloured was also taken into consideration. To find illegal subgraphs as soon as possible, we opted to build the partial graphs in such a way that the already-coloured edges are only between a small number of vertices.
In general, this gives a higher probability of creating cliques. We achieve this by filling in the adjacency matrix column by column, from left to right. We refer to~\cite{thesissteven} for more details on the algorithm. The source code of our implementation of this algorithm can be obtained from \url{https://github.com/Steven-VO/circulant-Ramsey}

\subsection{Other graphs}
\label{subsect:other}
In search of new lower bounds, we also tested all vertex-transitive graphs up to 47 vertices for their Ramsey properties concerning \( J_k \) and \( K_k \). This set of graphs was computed by Holt and Royle~\cite{holt2020census}.
Some improved bounds were found on Ramsey numbers of the form \( \ram{k}{l} \) and \( R(K_k,J_l) \), but all of them could also be reached (or even improved) using block-circulant graphs, so we do not list them separately here.

Many known interesting strongly-regular graphs where also checked. These are regular graphs where every two adjacent vertices share the same number of common neighbours, and the same is true for non-adjacent vertices. For some parameter sets, all strongly-regular graphs have been enumerated (see e.g.~\cite{coolsaet2006strongly}). Other sporadic interesting cases are described in~\cite{StrongReg}. This led to the discovery of \( VO_{6}^{-}(2) \) as a \( (J_5,J_7) \)-graph, which improved the previous lower bound of $\ram{5}{7}$~\cite{Exoo2000} by 25 to 65. (See the next section for details).

\subsection{Results}
\label{subsect:results}

With the techniques from Section~\ref{sect:LB}, no lower bounds on classical Ramsey numbers were improved.
However, several new lower bounds where found for Ramsey numbers of the form \( \ram{k}{l} \) and \( R(K_k,J_l) \), including two exact values: \( \ram{5}{6}=37 \) and \( \ram{5}{7}=65 \). These results are presented in Table~\ref{tbl:dropEdge}.

The Ramsey graphs which establish the new lower bounds (and the source code of our  algorithms to generate circulant and block-circulant Ramsey graphs) can be obtained from \url{https://github.com/Steven-VO/circulant-Ramsey} as well as from the \textit{House of Graphs}~\cite{hog} through the links in Table~\ref{tbl:dropEdge}. In each case we computationally verified that these graphs are indeed Ramsey graphs for the given parameters using two independent programs (see Section~\ref{subsect:testing} for details).

Many best-known lower bounds could be reproduced within seconds of CPU time using our generators for circulant and block-circulant Ramsey graphs. The complexity of the exhaustive non-existence results for block-circulant graphs grows very rapidly with increasing parameters, and we therefore limited such searches to about 3 days of CPU-time for each case.
Sometimes the largest found block-circulant Ramsey graph could be extended by one extra vertex, connected in a specific way to the other vertices. This is denoted by ``+1'' in Table~\ref{tbl:otherLower}.
In some cases we were able to construct larger Ramsey graphs by performing a \textit{local search} on a block-circulant Ramsey graph, that is: we remove some vertices and then add more new vertices in all possible ways and check if any of them is still a Ramsey graph. This is denoted by ``LS'' in Table~\ref{tbl:dropEdge}.

We also found an interesting link between three different Ramsey numbers. McNamara and Radziszowski~\cite{mcnamara1991ramsey} showed that there is a unique extremal graph for \( \ram{4}{7} \), known as the complement of the Schl\"afli graph. This graph can also be constructed as \( O^{-}_{6}(2) \): the orthogonal points on an elliptic quadric in \( PG(5,2) \) (see~\cite{StrongReg} for details).
 \( NO^{-}_6(2) \) is a geometrically related graph, which we found with the block-circulant generator, and which we proved to be extremal as a \( (J_5,J_6) \)-graph. These  graphs are combined in \(  VO_{6}^{-}(2) \), which turned out to be an extremal Ramsey graph for \( \ram{5}{7} \). These three graphs are all vertex-transitive, strongly-regular and block-circulant.

\begin{theorem}
\( \ram{5}{6}=37 \).
\end{theorem}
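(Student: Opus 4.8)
The plan is to establish the equality $\ram{5}{6}=37$ by combining the upper bound already proved in Theorem~\ref{thm:ub_r56} with a matching lower bound. Since the upper bound $\ram{5}{6}\leq 37$ is in hand, it suffices to show $\ram{5}{6}\geq 37$, i.e.\ to exhibit a single $(J_5,J_6;36)$-graph: a two-colouring of $K_{36}$ with no blue $J_5$ and no red $J_6$.

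First I would invoke the construction announced at the end of Section~\ref{subsect:other}, namely the strongly-regular graph $NO_6^-(2)$ found by the block-circulant generator. The concrete task is to verify that this $36$-vertex graph, read as a two-coloured $K_{36}$, is indeed Ramsey for the parameters $(J_5,J_6)$. Because the graph is explicitly block-circulant, its adjacency matrix is fully specified by a small collection of generating sets $D_{i,j}$ (as in the example $A_1^*$), so I would present these generating sets and then certify the absence of both forbidden subgraphs. This certification is a finite, mechanical check: one confirms that no five vertices induce a blue $K_5-e$ and no six vertices induce a red $K_6-e$. I would rely on the independent verification procedure referenced in Section~\ref{subsect:testing}, performed with two separate programs, to make the claim rigorous rather than merely asserted.

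The two bounds then close immediately: from Theorem~\ref{thm:ub_r56} we have $\ram{5}{6}\leq 37$, and from the existence of the $(J_5,J_6;36)$-graph $NO_6^-(2)$ we have $\ram{5}{6}\geq 37$, whence $\ram{5}{6}=37$. I would also note, as a remark rather than a necessary step, that $NO_6^-(2)$ is extremal, which dovetails with the surrounding discussion linking it to $O_6^-(2)$ (extremal for $\ram{4}{7}$) and to $VO_6^-(2)$ (extremal for $\ram{5}{7}$).

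The main obstacle here is not conceptual but one of trustworthy verification. The upper-bound half was genuinely delicate, resting on the exact edge-extremal values $E_i$ and the non-integrality argument of Theorem~\ref{thm:ub_r56}; that work is already done. For the lower-bound half the entire weight rests on correctly certifying that the exhibited $36$-vertex graph avoids both $J_5$ (blue) and $J_6$ (red). The difficulty is therefore in exhibiting the right graph and in making the subgraph-freeness check convincing and reproducible, which is precisely why the graph is made available (via the repository and the \emph{House of Graphs} links in Table~\ref{tbl:dropEdge}) and why the check is carried out with two independent implementations.
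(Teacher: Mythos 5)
Your proposal matches the paper's own proof: the upper bound is taken from Theorem~\ref{thm:ub_r56}, and the lower bound is established by exhibiting $NO_6^-(2)$ as a $(J_5,J_6;36)$-graph, verified computationally with two independent programs. This is essentially the same argument, so there is nothing to correct.
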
 
 \begin{proof}
 The upper bound follows from Theorem~\ref{thm:ub_r56}. The lower bound is established by \( NO^{-}_6(2) \) for which we computationally verified that it is a $(J_5,J_6;36)$-graph using two independent programs.
 \end{proof}
 
 \begin{theorem}
\( \ram{5}{7}=65 \) and the graph \(  VO_{6}^{-}(2) \) is the only extremal Ramsey graph for $\ram{5}{7}$.
\end{theorem}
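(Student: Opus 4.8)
The proof splits into establishing the value $\ram{5}{7}=65$ and establishing uniqueness of the extremal graph, and the value is almost immediate from what precedes. The upper bound $\ram{5}{7}\le 65$ is exactly Corollary~\ref{corr:j5j7}, while the matching lower bound follows from the existence of the $(J_5,J_7;64)$-graph $VO_6^-(2)$ found in Section~\ref{subsect:other}, whose Ramsey property I would verify computationally with two independent programs. So the real content is the uniqueness statement, and this is where I would concentrate the argument.

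My plan for uniqueness is to first prove a strong local-regularity statement, using the neighbourhood decomposition underlying the gluing method, and then to reconstruct the graph from this local data. Let $G$ be an arbitrary $(J_5,J_7;64)$-graph and fix a vertex $v$. As recalled in Section~\ref{subsect:UB_prelim}, $\neigh{G}{1}{v}$ is a $(J_4,J_7;\deg_1(v))$-graph and $\neigh{G}{2}{v}$ is a $(J_5,J_6;\deg_2(v))$-graph, with $\deg_1(v)+\deg_2(v)=63$. Using $\ram{4}{7}=28$~\cite{mcnamara1991ramsey} together with the value $\ram{5}{6}=37$ just proved, I get $\deg_1(v)\le 27$ and $\deg_2(v)\le 36$; since the two degrees sum to $63=27+36$, both inequalities must be tight. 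Hence every $(J_5,J_7;64)$-graph is $27$-regular in the first colour and $36$-regular in the second, and moreover every $\neigh{G}{1}{v}$ is an \emph{extremal} $(J_4,J_7)$-graph and every $\neigh{G}{2}{v}$ an extremal $(J_5,J_6;36)$-graph.

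Next I would exploit the fact that the extremal $(J_4,J_7)$-graph is unique: by McNamara and Radziszowski~\cite{mcnamara1991ramsey} it is the complement of the Schl\"afli graph, i.e.\ $O_6^-(2)$. Thus $G$ is \emph{locally} (in the first colour) a fixed, highly symmetric strongly-regular graph on $27$ vertices. The final step is to reconstruct $G$ from this local data using the gluing framework of Section~\ref{sect:UB}: fix one vertex $v$, set $\neigh{G}{1}{v}\cong O_6^-(2)$, and enumerate all colourings of the edges between $\neigh{G}{1}{v}$ and $\neigh{G}{2}{v}$ which avoid a first-colour $J_5$ and a second-colour $J_7$, while additionally enforcing that every other vertex again has an $O_6^-(2)$ first-colour neighbourhood. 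I expect this rigidity to pin down the whole structure and to leave $VO_6^-(2)$ as the only possibility, which can then be confirmed by an exhaustive, independently verified search.

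The hard part will be this reconstruction step, since a priori the number of completions of the gluing is enormous; everything depends on using the rigidity of $O_6^-(2)$ — its large automorphism group, its strong regularity, and the requirement that the first-colour neighbourhood of \emph{every} vertex be an isomorphic copy — to prune aggressively enough for the search both to terminate and to certify that only $VO_6^-(2)$ survives. A secondary technical point is whether one must also classify all extremal $(J_5,J_6;36)$-graphs, which play the role of the second-colour neighbourhoods; if so, this classification would fall out of the same framework, starting from the fact that $NO_6^-(2)$ is known to be extremal. As elsewhere in the paper, I would guard against implementation error by running two independent programs and cross-checking the single surviving graph against $VO_6^-(2)$.
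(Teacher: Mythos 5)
Your treatment of the value and of the local structure coincides with the paper's: the upper bound is Corollary~\ref{corr:j5j7} (or the result of~\cite{SemiDef}), the lower bound is the computationally verified graph \( VO_6^-(2) \), and your tightness argument -- \( \deg_1(v)\leq 27 = \ram{4}{7}-1 \), \( \deg_2(v)\leq 36 = \ram{5}{6}-1 \), with the degrees summing to \(63\), forcing both bounds to be attained and hence \( \neigh{G}{1}{v}\cong O^{-}_{6}(2) \) for \emph{every} vertex \(v\) by the uniqueness of the extremal \((J_4,J_7)\)-graph~\cite{mcnamara1991ramsey} -- is exactly what the paper means by ``the simple arguments from Section~\ref{sect:UB}''.

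Where you diverge is the finishing step, and there your proposal has a genuine gap. You plan an exhaustive gluing reconstruction (enumerate colourings of the \(27\times 36 = 972\) edges between the two neighbourhoods, enforcing the local condition at every vertex), and you candidly admit that both termination and success hinge on unquantified pruning power; as written this is a proposed computation whose feasibility is not established, not a proof -- a priori there are \(2^{972}\) completions, and you give no argument that the rigidity of \( O^{-}_{6}(2) \) tames this. The paper needs no such search: graphs that are locally \( O^{-}_{6}(2) \) have already been completely classified~\cite{buekenhout} -- there are exactly two connected ones, \( VO_6^-(2) \) and \( TO_6^-(2) \) -- and \( TO_6^-(2) \) is excluded because its independence number is \(7\), so its second colour contains a \(K_7\) and hence a \(J_7\). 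Uniqueness is then immediate, and your secondary worry about classifying the extremal \((J_5,J_6;36)\)-graphs evaporates: the first-colour local condition alone pins down the graph. The missing idea in your proposal is knowledge of this classification theorem; with it, your entire reconstruction step collapses to a two-case check.
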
 
\begin{proof}
The upper bound follows from~\cite{SemiDef} or Corollary~\ref{corr:j5j7}.
The lower bound is established by \(  VO_{6}^{-}(2) \) for which we computationally verified that it is a $(J_5,J_7;64)$-graph using two independent programs.

Since \(O^{-}_{6}(2)  \) is unique as extremal \((J_4,J_7)  \)-graph~\cite{mcnamara1991ramsey}, and \( \ram{5}{7} = \ram{5}{6} + \ram{4}{7}\), it follows from the simple arguments from Section~\ref{sect:UB} that every \( (J_5,J_7;64) \)-graph $G$ must have the following property: \( \forall \, v \in V: \neigh{G}{1}{v} \cong O^{-}_{6}(2) \).  The graphs with this property have been completely characterised (there are only two of them which are connected)~\cite{buekenhout}. The first is \(  VO_{6}^{-}(2) \); and the other graph with this property is known as \(TO_{6}^{-}(2) \), but has independence number~$7$.
Therefore  \(  VO_{6}^{-}(2) \) is the only extremal Ramsey graph for $\ram{5}{7}$. 
\end{proof}

\begin{table}[htb!]
\centering
\setlength{\tabcolsep}{4pt} 
\small
	\begin{tabular}{l|c|c|c|c}
		Old bounds & New LB & Method & Old LB reference & HoG id\\ \hline
		$ 47 \leq R(K_3,J_{12})\leq 53 $  & 49 & Block-circulant 
		& Implied by $R(K_3,K_{11})$ & \href{https://hog.grinvin.org/ViewGraphInfo.action?id=44120}{44120}\\
		$ 60\leq R(K_3,J_{14}) \leq 71 $  & 61 & Block-circulant 
		& Implied by $R(K_3,K_{13})$ & \href{https://hog.grinvin.org/ViewGraphInfo.action?id=44122}{44122}\\
		\( 29\leq R(J_4,K_8) \leq 39\)    & 36 & Block-circulant  
		& Implied by $R(J_4,K_7)$   & \href{https://hog.grinvin.org/ViewGraphInfo.action?id=44116}{44116}\\
		$ 36\leq R(J_4,K_9)\leq 56 $ & 41& Block-circulant
		 & Implied by $R(K_3,K_9)$ & \href{https://hog.grinvin.org/ViewGraphInfo.action?id=44132}{44132} \\
		 $ 41\leq R(J_4,J_{10})\leq 63 $ & 43& Block-circulant
		 & Exoo (2000)~\cite{Exoo2000} & \href{https://hog.grinvin.org/ViewGraphInfo.action?id=45620}{45620} \\
		\( 41 \leq R(J_4,K_{10}) \leq 65 \)   & 49  & Block-circulant
		& Implied by $R(J_4,J_{10}$)  & \href{https://hog.grinvin.org/ViewGraphInfo.action?id=44124}{44124}\\
		\( 74 \leq R(J_4, J_{16})  \)  & 82 & Strongly regular graph
		& Implied by \( R(K_3,K_{15}) \) & \href{https://hog.grinvin.org/ViewGraphInfo.action?id=962}{962} \\
		\(49 \leq R(K_4,J_8)\leq 74 \)    & 52  & Block-circulant
		& Implied by $R(K_4,K_7)$ & \href{https://hog.grinvin.org/ViewGraphInfo.action?id=44112}{44112}\\
		\(59 \leq R(K_4,J_9)\leq 105 \)    & 62  & Circulant
		& Implied by $R(K_4,K_8)$ & \href{https://hog.grinvin.org/ViewGraphInfo.action?id=44130}{44130}\\
		\( 31 \leq R(J_5,J_6) \leq 38 \)   & \textbf{37} & Block-circulant
		&  Exoo (2000)~\cite{Exoo2000}  & \href{https://hog.grinvin.org/ViewGraphInfo.action?id=34470}{34470}\\
		\( 37 \leq R(J_5,K_6)\leq 53 \)  & 43 & Block-circulant (+LS) 
		& Exoo (2000)~\cite{Exoo2000} & \href{https://hog.grinvin.org/ViewGraphInfo.action?id=44118}{44118}\\
		\( 40 \leq R(J_5,J_7)\leq 65 \)  & \textbf{65} & Strongly regular graph
		& Exoo (2000)~\cite{Exoo2000} & \href{https://hog.grinvin.org/ViewGraphInfo.action?id=35441}{35441}\\
		\( 80 \leq R(K_5,J_8) \leq 175  \)  & 81 & Circulant
		& Implied by \( R(K_5,K_7) \) & \href{https://hog.grinvin.org/ViewGraphInfo.action?id=45622}{45622}\\
		\( 101 \leq R(K_5, J_9) \leq 275  \)  & 121  & Strongly regular graph
		& Implied by \( R(K_5,K_8) \) & \href{https://hog.grinvin.org/ViewGraphInfo.action?id=44126}{44126}\\
		\( 80 \leq R(J_6,J_8) \leq 218  \)  & 83 & Circulant
		 & Implied by \( R(K_5,K_7) \) & \href{https://hog.grinvin.org/ViewGraphInfo.action?id=44128}{44128}\\
	\end{tabular}
	\caption{Improved lower bounds for Ramsey numbers of the form \( R(J_k,J_l) \), together with the best-known bounds prior to this work. Exact values are marked in bold. ``LS'' stands for local search. The last column refers to the ids of these new Ramsey graphs in the \textit{House of Graphs}~\cite{hog}.}
	\label{tbl:dropEdge}
\end{table}

We focussed on Ramsey numbers involving \( J_k \), but also executed our algorithms on other combinations of parameters, including multi-colour Ramsey numbers. We obtained several improvements over the current lower bounds, including some exact values for Ramsey numbers on wheels and complete bipartite graphs. These results are shown in Table~\ref{tbl:otherLower}. Note that \(W_n\) (i.e.\ a wheel graph on $n$ vertices) is not contained in \(W_{n+1}\). Therefore these Ramsey numbers are not necessarily increasing in $n$. We believe that more improvements could be possible by applying the same techniques to Ramsey numbers with different sets of parameters. But as there is a very large number of parameter combinations, we focussed on the most common cases.

	\begin{table}[htb!]
		\centering
		\begin{tabular}{l|c|c|c}
			Old bounds & New LB  & Method & HoG id\\ \hline
			\( R(W_7,W_4) \leq 21 \) & \textbf{21}  & Block-circulant & \href{https://hog.grinvin.org/ViewGraphInfo.action?id=35445}{35445}\\ 
			\( R(W_7,W_7)\leq 19 \)     & \textbf{19}  & Block-circulant & \href{https://hog.grinvin.org/ViewGraphInfo.action?id=35443}{35443} \\
			\(  R(W_9,W_9) \)     &  21   & Block-circulant& \href{https://hog.grinvin.org/ViewGraphInfo.action?id=44138}{44138} \\
			\(  R(K_6,W_6) \leq 40 \)     &  34   & Block-circulant & \href{https://hog.grinvin.org/ViewGraphInfo.action?id=44134}{44134} \\
			\( 43 \leq R(K_7, W_5)\leq 50 \) & 45   & Block-circulant & \href{https://hog.grinvin.org/ViewGraphInfo.action?id=44136}{44136}  \\
				\(  R(K_7,W_6) \leq 55 \)     &   45  & Block-circulant & \href{https://hog.grinvin.org/ViewGraphInfo.action?id=44136}{44136}\\
			\( 39 \leq R(K_{11},C_4)\leq 44 \) & 40 & Block-circulant & \href{https://hog.grinvin.org/ViewGraphInfo.action?id=44167}{44167}\\ 
			24 \(\leq  R(K_{2,6},K_{2,8}) \leq 25 \) & \textbf{25} & Block-circulant & \href{https://hog.grinvin.org/ViewGraphInfo.action?id=44140}{44140}\\ 
			28 \(\leq  R(K_{2,7},K_{2,10}) \leq 31 \) & 29 & Block-circulant & \href{https://hog.grinvin.org/ViewGraphInfo.action?id=44142}{44142}\\ 
			32 \(\leq  R(K_{2,8},K_{2,10}) \leq 33 \) & \textbf{33} & Block-circulant & \href{https://hog.grinvin.org/ViewGraphInfo.action?id=44144}{44144} \\ 
			\(R(K_{2,8},K_{2,11}) \leq 35 \) & \textbf{35} & Block-circulant & \href{https://hog.grinvin.org/ViewGraphInfo.action?id=44146}{44146}\\ 
			\( 36\leq R(K_{2,9},K_{2,11}) \leq 37 \) & \textbf{37} & Block-circulant & \href{https://hog.grinvin.org/ViewGraphInfo.action?id=44148}{44148}\\ 
			\( R(K_{3,4},K_{2,5}) \leq 20 \) & \textbf{20} & Block-circulant (+1) & \href{https://hog.grinvin.org/ViewGraphInfo.action?id=44150}{44150}\\ 
			\( R(K_{3,4},K_{3,3}) \leq 20 \) & 19 & Block-circulant & \href{https://hog.grinvin.org/ViewGraphInfo.action?id=44152}{44152}\\ 
			\( R(K_{3,4},K_{3,4}) \leq 25 \) & \textbf{25} & Block-circulant & \href{https://hog.grinvin.org/ViewGraphInfo.action?id=44154}{44154}\\
			\( R(K_{3,5},K_{2,4}) \leq 20 \) & 19 & Block-circulant & \href{https://hog.grinvin.org/ViewGraphInfo.action?id=45601}{45601} \\
			\( R(K_{3,5},K_{2,5}) \leq 23 \) & 21 & Circulant & \href{https://hog.grinvin.org/ViewGraphInfo.action?id=44156}{44156} \\
			\( R(K_{3,5},K_{3,3}) \leq 24 \) & 21 & Circulant & \href{https://hog.grinvin.org/ViewGraphInfo.action?id=44156}{44156}\\
			\( R(K_{3,5},K_{3,4}) \leq 29 \) & 25 & Block-circulant & \href{https://hog.grinvin.org/ViewGraphInfo.action?id=44158}{44158}\\  
			\(30 \leq R(K_{3,5},K_{3,5}) \leq 33 \) & \textbf{33} & Block-circulant & \href{https://hog.grinvin.org/ViewGraphInfo.action?id=44160}{44160}\\ 
			\(30 \leq R(K_{4,4},K_{4,4}) \leq 49 \) & 33 & Block-circulant & \href{https://hog.grinvin.org/ViewGraphInfo.action?id=44162}{44162}\\ 
			\( 30 \leq R(K_3,J_4,K_4)\leq 40 \)   & 31 & Block-circulant & (\href{https://github.com/Steven-VO/circulant-Ramsey}{GitHub})\\
			\( 28 \leq R(K_4,J_4,C_4)\leq 36 \)  & 29  & Block-circulant & (\href{https://github.com/Steven-VO/circulant-Ramsey}{GitHub})\\
		\end{tabular}
		\caption{Improved lower bounds for various Ramsey numbers. Exact values are marked in bold. ``+1'' denotes a one-vertex extension. The upper bounds are all from~\cite{SemiDef} or~\cite{dynSur}. The last column refers to the ids of these new Ramsey graphs in the \textit{House of Graphs}~\cite{hog}. The three-coloured graphs are only available on \href{https://github.com/Steven-VO/circulant-Ramsey}{GitHub}.}
		\label{tbl:otherLower}
	\end{table}

\subsection{Correctness testing}
\label{subsect:testing}
All programs were written in the programming language C. The counts of the generated \( (J_5,J_5) \)-graphs on 20 and 21 vertices (cf.\ Table~\ref{table:countJ5J5}) are in complete agreement with previous results in~\cite{SRK5e}. As a partial verification for the correctness of \( \RamSet(J_5,J_5;19) \), up to 2 vertices were removed and added again in every possible way for every graph in \( \RamSet(J_5,J_5;19) \). This led to exactly the same set of Ramsey graphs.
The correctness of the counts in Table~\ref{table:countJ4J6} was tested by writing a plugin for the program \verb|geng|~\cite{nauty-website, mckay_14}. This yielded exactly the same graphs as those we received from Radziszowski~\cite{sprCom}.

The graphs witnessing the lower bounds reported in Table~\ref{tbl:dropEdge} and Table~\ref{tbl:otherLower}  were all independently verified using the Graph-package in \textit{SageMath}.
Together with the source code of our generators, they can be obtained from \url{https://github.com/Steven-VO/circulant-Ramsey}

\section{Further research}
\label{sect:further_research}

\begin{problem}
Is \( NO^{-}_6(2) \) the only extremal Ramsey graph for $\ram{5}{6}$?
\end{problem}

We strongly suspect that this is the case as the closely related extremal Ramsey graphs for \( \ram{4}{7} \) and \( \ram{5}{7} \) are unique as well. The following computational evidence also seems to indicate that \(NO^{-}_6(2) \)  is the only $(J_5,J_6;36)$-graph:

\begin{itemize}
\item A local search was performed on $NO^{-}_6(2)$: we removed 2 vertices and then readded them in all possible ways and this did not yield any additional $(J_5,J_6;36)$-graphs. So within a distance of 2 no other $(J_5,J_6;36)$-graphs exist.
\item Up to isomorphism $NO^{-}_6(2)$ is the only block-circulant \( (J_5,J_6;36) \)-graph on 6 blocks or less.
\end{itemize}

It can be observed that Ramsey numbers of the form \( \ram{k}{l} \) seem to ``behave better'' than the Ramsey numbers \( R(K_k,J_l) \): there are extremal graphs with a more apparent structure, and they are often closer to the theoretical upper bound.
More specifically, some ``strictly smaller'' cases than \( \ram{5}{6} \) and \( \ram{5}{7} \) are still unsolved: \(30 \leq R(K_4,J_6)\leq 32 \) and \( 30 \leq R(J_5,K_5) \leq 33 \).


\subsection*{Acknowledgements}

We would like to thank Gunnar Brinkmann and Stanis{\l}aw Radziszowski for useful suggestions.
Several of the computations for this work were carried out using the supercomputer infrastructure provided by the VSC (Flemish Supercomputer Center), funded by the Research Foundation Flanders (FWO) and the Flemish Government.


\bibliographystyle{plain}
\bibliography{references}

\begin{thebibliography}{10}

\bibitem{angeltveitmckay}
V.~Angeltveit and B.D. McKay.
\newblock {$R(5, 5) \leq 48$}.
\newblock {\em Journal of Graph Theory}, 89(1):5--13, 2018.

\bibitem{hog}
G.~Brinkmann, K.~Coolsaet, J.~Goedgebeur, and H.~M{\'e}lot.
\newblock {House of Graphs: a database of interesting graphs}.
\newblock {\em Discrete Applied Mathematics}, 161(1-2):311--314, 2013.
\newblock Available at \url{http://hog.grinvin.org/}.

\bibitem{StrongReg}
A.E. Brouwer and H.~Van Maldeghem.
\newblock {\em Strongly regular graphs}.
\newblock Springer, (to appear).
\newblock Preprint available at:
  \url{https://homepages.cwi.nl/~aeb/math/srg/rk3/srgw.pdf}.

\bibitem{buekenhout}
F.~Buekenhout and X.~Hubaut.
\newblock Locally polar spaces and related rank 3 groups.
\newblock {\em Journal of Algebra}, 45(2):391--434, 1977.

\bibitem{clapham1989ramsey}
C.~Clapham, G.~Exoo, H.~Harborth, I.~Mengersen, and J.~Sheehan.
\newblock {The Ramsey number of $K_5-e$}.
\newblock {\em Journal of Graph Theory}, 13(1):7--15, 1989.

\bibitem{coolsaet2006strongly}
K.~Coolsaet, J.~Degraer, and E.~Spence.
\newblock The strongly regular $(45, 12, 3, 3) $ graphs.
\newblock {\em The Electronic Journal of Combinatorics}, 13, 2006.

\bibitem{graph_theory_diestel}
R.~Diestel.
\newblock {\em Graph theory}, volume 173 of {\em Graduate Texts in
  Mathematics}.
\newblock Springer, Heidelberg, fifth edition, 2017.

\bibitem{exoo1998some}
G.~Exoo.
\newblock {Some new Ramsey colorings}.
\newblock {\em The Electronic Journal of Combinatorics}, 5, 1998.

\bibitem{Exoo2000}
G.~Exoo.
\newblock {New Lower Bounds for Table {III}}.
\newblock \url{http://ginger.indstate.edu/ge/RAMSEY/oneedge.pdf}, 2000.

\bibitem{exoo2012ramsey}
G.~Exoo.
\newblock {On the Ramsey Number $R(4, 6)$}.
\newblock {\em The Electronic Journal of Combinatorics}, 19(1), 2012.

\bibitem{exoo2013some}
G.~Exoo.
\newblock {On Some Small Classical Ramsey Numbers}.
\newblock {\em The Electronic Journal of Combinatorics}, 20(1), 2013.

\bibitem{ExT}
G.~Exoo and M.~Tatarevic.
\newblock New lower bounds for 28 classical {R}amsey numbers.
\newblock {\em The Electronic Journal of Combinatorics}, 22(3), 2015.

\bibitem{staszek13}
J.~Goedgebeur and S.P. Radziszowski.
\newblock {New computational upper bounds for Ramsey numbers $R(3,k)$}.
\newblock {\em The Electronic Journal of Combinatorics}, 20(1), 2013.

\bibitem{staszek13-2}
J.~Goedgebeur and S.P. Radziszowski.
\newblock {The Ramsey Number $R(3,K_{10}-e)$ and Computational Bounds for
  $R(3,G)$}.
\newblock {\em The Electronic Journal of Combinatorics}, 20(4), 2013.

\bibitem{Goodman}
A.W. Goodman.
\newblock On sets of acquaintances and strangers at any party.
\newblock {\em The American Mathematical Monthly}, 66(9):778--783, 1959.

\bibitem{greenwood1955combinatorial}
R.E. Greenwood and A.M. Gleason.
\newblock Combinatorial relations and chromatic graphs.
\newblock {\em Canadian Journal of Mathematics}, 7:1--7, 1955.

\bibitem{HaKr}
H.~Harborth and S.~Krause.
\newblock {R}amsey numbers for circulant colorings.
\newblock {\em Congressus Numerantium}, 161:139--150, 2003.

\bibitem{holt2020census}
D.~Holt and G.~Royle.
\newblock A census of small transitive groups and vertex-transitive graphs.
\newblock {\em Journal of Symbolic Computation}, 101:51--60, 2020.

\bibitem{Kuznetsov}
E.~{Kuznetsov}.
\newblock {Computational lower limits on small {R}amsey numbers}.
\newblock {\em arXiv e-prints}, page arXiv:1505.07186, 2016.

\bibitem{SemiDef}
B.~{Lidick{\'y}} and F.~{Pfender}.
\newblock {Semidefinite Programming and Ramsey Numbers}.
\newblock {\em arXiv preprint arXiv:1704.03592}, 2017.

\bibitem{nauty-website}
B.D. McKay.
\newblock {nauty User's Guide (Version 2.7)}.
\newblock Technical Report TR-CS-90-02, Department of Computer Science,
  Australian National University. The latest version of the software is
  available at \url{http://cs.anu.edu.au/~bdm/nauty}.

\bibitem{mckay_14}
B.D. McKay and A.~Piperno.
\newblock Practical graph isomorphism, {II}.
\newblock {\em Journal of Symbolic Computation}, 60:94--112, 2014.

\bibitem{R45}
B.D. McKay and S.P. Radziszowski.
\newblock {$R(4,5) = 25$}.
\newblock {\em Journal of Graph Theory}, 19(3):309--322, 1995.

\bibitem{mckay1997subgraph}
B.D. McKay and S.P. Radziszowski.
\newblock {Subgraph counting identities and Ramsey numbers}.
\newblock {\em Journal of Combinatorial Theory, Series B}, 69(2):193--209,
  1997.

\bibitem{mcnamara1991ramsey}
J.~McNamara and S.P. Radziszowski.
\newblock {The Ramsey Numbers $R(K_4-e, K_6-e)$ and $R(K_4-e, K_7-e)$}.
\newblock {\em Congressus Numerantium}, 81:{89--96}, 1991.

\bibitem{SRK5e}
S.P. Radziszowski.
\newblock {On the {R}amsey number $R(K_5 - e,K_5 - e)$}.
\newblock {\em Ars Combinatoria}, 36:225--232, 1993.

\bibitem{sprCom}
S.P. Radziszowski.
\newblock Personal communication, 2020.

\bibitem{dynSur}
S.P. Radziszowski.
\newblock {Small {R}amsey Numbers, Dynamic Survey {DS1}}.
\newblock {\em The Electronic Journal of Combinatorics}, revision \#16:116,
  2021.

\bibitem{thesissteven}
S.~Van~Overberghe.
\newblock {Algorithms for computing Ramsey numbers}.
\newblock Master's thesis, Ghent University, 2020.
\newblock (Advisor: J.~Goedgebeur).

\end{thebibliography}


\newpage 

\section*{Appendix}

\begin{table}[h]
\centering
\scriptsize
\setlength{\tabcolsep}{4pt} 
\renewcommand{\arraystretch}{0.9}
	\begin{tabular}{|r|cccccccccccccc|c|}
	\hline 
	$e$\textbackslash $n$ &3&4& 5 & 6 & 7 & 8 & 9 & 10 & 11 & 12 & 13 & 14 & 15 & 16 & total\\ 
	\hline
	0 & 1 & 1 & 1 &  &  &  &  &  &  &  &  &  &  &  & 3 \\ 
	1 & 1 & 1 & 1 &  &  &  &  &  &  &  &  &  &  &  & 3 \\ 
	2 & 1 & 2 & 2 & 2 &  &  &  &  &  &  &  &  &  &  & 7 \\ 
	3 & 1 & 3 & 4 & 5 & 1 &  &  &  &  &  &  &  &  &  & 14 \\ 
	4 &  & 2 & 6 & 9 & 6 & 1 &  &  &  &  &  &  &  &  & 24 \\ 
	5 &  &  & 5 & 14 & 16 & 2 &  &  &  &  &  &  &  &  & 37 \\ \hline
	6 &  &  & 3 & 17 & 34 & 15 & 1 &  &  &  &  &  &  &  & 70 \\ 
	7 &  &  &  & 12 & 49 & 49 & 4 &  &  &  &  &  &  &  & 114 \\ 
	8 &  &  &  & 6 & 55 & 122 & 25 & 1 &  &  &  &  &  &  & 209 \\ 
	9 &  &  &  & 2 & 45 & 210 & 101 & 5 &  &  &  &  &  &  & 363 \\ 
	10 &  &  &  &  & 22 & 260 & 355 & 23 & 1 &  &  &  &  &  & 661 \\ \hline
	11 &  &  &  &  & 6 & 223 & 853 & 104 & 3 &  &  &  &  &  & 1189 \\ 
	12 &  &  &  &  & 1 & 136 & 1399 & 529 & 12 & 1 &  &  &  &  & 2078 \\ 
	13 &  &  &  &  &  & 49 & 1537 & 2066 & 49 & 1 &  &  &  &  & 3702 \\ 
	14 &  &  &  &  &  & 12 & 1163 & 5567 & 230 & 4 &  &  &  &  & 6976 \\ 
	15 &  &  &  &  &  & 2 & 582 & 9713 & 1305 & 14 &  &  &  &  & 11616 \\ \hline
	16 &  &  &  &  &  & 1 & 187 & 11072 & 6876 & 45 &  &  &  &  & 18181 \\ 
	17 &  &  &  &  &  &  & 38 & 8261 & 24508 & 168 &  &  &  &  & 32975 \\ 
	18 &  &  &  &  &  &  & 9 & 4020 & 54803 & 912 &  &  &  &  & 59744 \\ 
	19 &  &  &  &  &  &  & 1 & 1238 & 76567 & 6341 &  &  &  &  & 84147 \\ 
	20 &  &  &  &  &  &  & 1 & 252 & 67697 & 36852 & 2 &  &  &  & 104804 \\ \hline
	21 &  &  &  &  &  &  &  & 41 & 37915 & 133255 & 26 &  &  &  & 171237 \\ 
	22 &  &  &  &  &  &  &  & 7 & 13360 & 288749 & 447 &  &  &  & 302563 \\ 
	23 &  &  &  &  &  &  &  & 2 & 2940 & 379164 & 5498 &  &  &  & 387604 \\ 
	24 &  &  &  &  &  &  &  & 1 & 420 & 306638 & 43510 &  &  &  & 350569 \\ 
	25 &  &  &  &  &  &  &  & 1 & 47 & 153238 & 196804 &  &  &  & 350090 \\ \hline
	26 &  &  &  &  &  &  &  &  & 4 & 47177 & 513057 & 40 &  &  & 560278 \\ 
	27 &  &  &  &  &  &  &  &  & 1 & 8832 & 786913 & 605 &  &  & 796351 \\ 
	28 &  &  &  &  &  &  &  &  &  & 1025 & 725109 & 6327 &  &  & 732461 \\ 
	29 &  &  &  &  &  &  &  &  &  & 78 & 405097 & 37163 &  &  & 442338 \\ 
	30 &  &  &  &  &  &  &  &  &  & 7 & 137389 & 128853 & 7 &  & 266256 \\\hline 
	31 &  &  &  &  &  &  &  &  &  &  & 28005 & 268857 & 24 &  & 296886 \\ 
	32 &  &  &  &  &  &  &  &  &  &  & 3420 & 343724 & 151 &  & 347295 \\ 
	33 &  &  &  &  &  &  &  &  &  &  & 250 & 269634 & 589 &  & 270473 \\ 
	34 &  &  &  &  &  &  &  &  &  &  & 16 & 129676 & 1645 &  & 131337 \\ 
	35 &  &  &  &  &  &  &  &  &  &  &  & 38220 & 3063 &  & 41283 \\ \hline
	36 &  &  &  &  &  &  &  &  &  &  &  & 6999 & 4105 &  & 11104 \\ 
	37 &  &  &  &  &  &  &  &  &  &  &  & 831 & 4030 &  & 4861 \\ 
	38 &  &  &  &  &  &  &  &  &  &  &  & 71 & 3156 &  & 3227 \\ 
	39 &  &  &  &  &  &  &  &  &  &  &  & 7 & 1979 &  & 1986 \\ 
	40 &  &  &  &  &  &  &  &  &  &  &  &  & 979 & 1 & 980 \\ \hline
	41 &  &  &  &  &  &  &  &  &  &  &  &  & 374 &  & 374 \\ 
	42 &  &  &  &  &  &  &  &  &  &  &  &  & 121 &  & 121 \\ 
	43 &  &  &  &  &  &  &  &  &  &  &  &  & 33 &  & 33 \\ 
	44 &  &  &  &  &  &  &  &  &  &  &  &  & 7 &  & 7 \\ 
	45 &  &  &  &  &  &  &  &  &  &  &  &  & 3 &  & 3 \\ \hline
	46 &  &  &  &  &  &  &  &  &  &  &  &  &  &  & 0 \\ 
	47 &  &  &  &  &  &  &  &  &  &  &  &  &  &  & 0 \\ 
	48 &  &  &  &  &  &  &  &  &  &  &  &  &  & 1 & 1 \\ 
	49 &  &  &  &  &  &  &  &  &  &  &  &  &  & 1 & 1 \\ 
	50 &  &  &  &  &  &  &  &  &  &  &  &  &  & 1 & 1 \\ 
	\hline
	total & 4& 9&22 & 67 & 235 & 1082 & 6256 & 42903 & 286738 & 1362501 & 2845543 & 1231007 & 20266 & 4 & 5796640 \\ 
	\hline
	\end{tabular}
	\caption{Counts for \( \RamSet (J_4,J_6) \) by number of vertices~$n$ and number of edges~$e$ in the first colour. }
	\label{table:countJ4J6}
\end{table}

\begin{table}[h]
	\centering
	\begin{tabular}{|r|ccc|}
		\hline
		$e$\textbackslash $n$ & 19 & 20 & 21 \\
		\hline 
		78 & 1 &  &   \\ 
		79 & 3 &  &   \\ 
		80 & 11 &  &   \\ 
		81 & 42 &  &  \\ 
		82 & 158 &  &  \\ 
		83 & 630 &  &  \\
		\hline 
		84 & 1926 &  & \\ 
		85 & 3440 &  & \\ 
		86 & 3440 &  &  \\ 
		87 & 1926 &  &  \\ 
		88 & 630 &  & \\ 
		89 & 158 &  & \\
		\hline
		90 & 42 & 3 & \\ 
		91 & 11 & 3 & \\ 
		92 & 3 & 5 &  \\ 
		93 & 1 & 11 & \\ 
		94 &  & 23 & \\ 
		95 &  & 35 &  \\
		\hline 
		96 &  & 23 &  \\ 
		97 &  & 11 &  \\ 
		98 &  & 5 & \\ 
		99 &  & 3 & \\ 
		100 &  & 3 & \\ 
		101 &  &  & \\
		\hline 
		102 &  &  & \\ 
		103 &  &  & \\ 
		104 &  &  &  \\ 
		105 &  &  & 1 \\ 
		\hline 
		total & 12422 & 125 & 1 \\ 
		\hline
	\end{tabular}
	\caption{Counts for \( \RamSet (J_5,J_5; n) \) for $19 \leq n \leq 21$ by number of vertices~$n$ and number of edges~$e$ in the first colour.}
	\label{table:countJ5J5}
\end{table}

\end{document}